\numberwithin{equation}{section}
\newcommand{\numberseries}{\mdseries}   
\newlength{\thmtopspace}                
\newlength{\thmbotspace}                
\newlength{\thmheadspace}               
\newlength{\thmindent}                  
\renewcommand{\subparagraph}{\vspace{\thmbotspace}}
\newtheoremstyle{bfupright head,slanted body}
{\thmtopspace}{\thmbotspace}
{\slshape}{\thmindent}{\bfseries}{.}{\thmheadspace}
{{\numberseries \thmnumber{\bf #2 }}\thmnote{#3}}
\newtheoremstyle{bfupright head,upright body}
{\thmtopspace}{\thmbotspace}
{\upshape}{\thmindent}{\bfseries}{.}{\thmheadspace}
{{\numberseries \thmnumber{\bf #2 }}\thmnote{#3}}
\newtheoremstyle{bfit head,upright body}
{\thmtopspace}{\thmbotspace}
{\upshape}{\thmindent}{\upshape}{.}{\thmheadspace}
{{\numberseries\thmnumber{\bf #2 }}
  {\bfseries\itshape\thmnote{\negthickspace#3}}}
\newtheoremstyle{it head,upright body}
{\thmtopspace}{\thmbotspace}
{\upshape}{\thmindent}{\upshape}{.}{\thmheadspace}
{{\numberseries\thmnumber{\bf #2 }}
  {\itshape\thmnote{\negthickspace#3}}}
\newtheoremstyle{fixed bf head,slanted body}
{\thmtopspace}{\thmbotspace}{\slshape}
{\thmindent}{\bfseries}{.}{\thmheadspace}
{{\numberseries \thmnumber{\bf  #2 }}\thmname{#1}\thmnote{ (#3)}}
\newtheoremstyle{fixed bf head,upright body}
{\thmtopspace}{\thmbotspace}{\upshape}
{\thmindent}{\bfseries}{.}{\thmheadspace}
{{\numberseries \thmnumber{\bf #2 }}\thmname{#1}\thmnote{ (#3)}}
\newtheoremstyle{fixed bfit head,upright body}
{\thmtopspace}{\thmbotspace}{\upshape}
{\thmindent}{\bfseries\itshape}{.}{\thmheadspace}
{{\numberseries \thmnumber{\bf#2 }}\thmname{#1}\thmnote{ (#3)}}
\newtheoremstyle{sc head,small body}
{\thmtopspace}{\thmbotspace}
{\small\upshape}{\thmindent}{\scshape}{.}{\thmheadspace}
{\thmname{#1}}
\newtheoremstyle{numbered paragraph}
{\thmtopspace}{\thmbotspace}{\upshape}
{\thmindent}{\upshape}{}{0pt}
{{\numberseries \thmnumber{\bf #2 }}}
\newtheoremstyle{unnumbered paragraph}
{\thmtopspace}{\thmbotspace}{\upshape}
{\parindent}{\upshape}{}{0pt}
\theoremstyle{bfupright head,slanted body}
\newtheorem{res}{}[section]             \newtheorem*{res*}{}
\theoremstyle{bfit head,upright body}
                 \newtheorem*{com*}{}
\theoremstyle{bfupright head,upright body}
\newtheorem{bfhpg}[res]{}               \newtheorem*{bfhpg*}{}
\theoremstyle{it head,upright body}
               \newtheorem*{ithpg*}{}
\theoremstyle{sc head,small body}
\theoremstyle{fixed bf head,slanted body}
\newtheorem{thm}[res]{Theorem}          \newtheorem*{thm*}{Theorem}
\newtheorem{prp}[res]{Proposition}      \newtheorem*{prp*}{Proposition}
        \newtheorem*{cor*}{Corollary}
\newtheorem{lem}[res]{Lemma}            \newtheorem*{lem*}{Lemma}
\theoremstyle{fixed bf head,upright body}
\newtheorem{dfn}[res]{Definition}       \newtheorem*{dfn*}{Definition}
     \newtheorem*{con*}{Construction}
      \newtheorem*{obs*}{Observation}
\newtheorem{rmk}[res]{Remark}           \newtheorem*{rmk*}{Remark}
          \newtheorem*{exa*}{Example}
         \newtheorem*{exe*}{Exercise}
            \newtheorem{stp*}{Setup}
            \newtheorem{blk*}{\!\!\!}
\newtheorem{ntn}[res]{Notation}
\theoremstyle{numbered paragraph}
\newtheorem{ipg}[res]{}
\theoremstyle{unnumbered paragraph}
\newtheorem{ipg*}{}
\newlength{\thmlistleft}        
\newlength{\thmlistright}       
\newlength{\thmlistpartopsep}   
\newlength{\thmlisttopsep}      
\newlength{\thmlistparsep}      
\newlength{\thmlistitemsep}     
\newcounter{eqc}
  {\end{list}}%
\newcounter{prt}
  {\end{list}}%
\newcounter{rqm}
  {\end{list}}%
\newcounter{exercise}
  {\end{list}}%
\newenvironment{prf*}[1][Proof]{%
  \begin{proof}[\it #1]
    \setcounter{equation}{0}
    \renewcommand{\theequation}{\arabic{equation}}}
  {\end{proof}
}
\newcommand{\pgref}[1]{(\ref{#1})}
\renewcommand{\eqref}[1]{\pgref{eq:#1}}
\numberwithin{equation}{res}
\newcounter{marcom}
\newcommand{\coker}{\nobreak{\operatorname{coker\,}}}
\renewcommand{\ge}{\geqslant}
\newcommand{\onto}{\twoheadrightarrow}
\newcommand{\lra}{\longrightarrow}
\newcommand{\xra}[2][]{\xrightarrow[#1]{\;#2\;}}
\newcommand{\fm}{\mathfrak{m}}
\newcommand{\Hom}{\operatorname{Hom}}
\newcommand{\longleftrightarrows}{\leftrarrows\joinrel\Rrelbar\joinrel\lrightarrows}
\newcommand{\leftrarrows}{\mathrel{\raise.75ex\hbox{\oalign{%
   $\scriptstyle\leftarrow$\cr
   \vrule width0pt height.5ex$\hfil\scriptstyle\relbar$\cr}}}}
 \newcommand{\lrightarrows}{\mathrel{\raise.75ex\hbox{\oalign{%
  $\scriptstyle\relbar$\hfil\cr
   $\scriptstyle\vrule width0pt height.5ex\smash\rightarrow$\cr}}}}
 \newcommand{\Rrelbar}{\mathrel{\raise.75ex\hbox{\oalign{%
   $\scriptstyle\relbar$\cr
  \vrule width0pt height.5ex$\scriptstyle\relbar$}}}}
\newcommand{\im}{\operatorname{im}}
\renewcommand{\Gamma}{\textrm{C}}
\newcommand{\kos}{\textrm{kos}}
\newcommand{\tr}{\textrm{tr}}
\begin{document}
  
  \large   
  
  \allowdisplaybreaks[2]
  
  \title{Transferring algebra structures on complexes}
  
  \author[C.\, Miller  ]{Claudia Miller}
  \address{C.M.\newline\hspace*{1em} Mathematics Department, Syracuse University, Syracuse, NY 13244, U.S.A.}
  \email{clamille@syr.edu} 
  \urladdr{http://clamille.mysite.syr.edu}

  \author[H. Rahmati]{Hamidreza Rahmati}  
  \address{H.R.\newline\hspace*{1em} Department of Mathematics,  University of Nebraska,
    Lincoln, NE 68588,  USA}  
  \email{hrahmati2@unl.edu}

  
  

\thanks{C.\ Miller partially supported by the National Science Foundation (DMS-1003384),  Syracuse University Small Grant, and Douglas R.\ Anderson Faculty Scholar Fund.}

  \begin{abstract}
  We discuss a homological method for transferring algebra structures on complexes along suitably nice homotopy equivalences, including those obtained after an application of the Perturbation Lemma. 
  We study the implications for the Homotopy Transfer Theorems under such homotopy equivalences. 
  
  As an application, we discuss how to use the homotopy on a Koszul complex given by a scaled de Rham map to find a new method for building a dg algebra structure on a well-known resolution, obtaining one that is both concrete and permutation invariant.
 
  \end{abstract}

  \maketitle
 
 \section*{Introduction}

In this paper we study descent of algebra structures on complexes along a suitably nice maps, discussing a method that allows one to transfer an algebra structure on a complex to another complex. More precisely, we show that a differential graded (dg) algebra structure on one complex can be transferred to another complex that is a deformation retract of it with a homotopy that satisfies a generalized version of the Leibniz rule and another mild hypothesis that is frequently imposed; see Proposition~\ref{descent-dga}.

For our main application, 
a homological tool called the Perturbation Lemma is the key. For this purpose, we also prove that the analogous dg algebra descent result holds even after applying the Perturbation Lemma as long as the complex remains a dg algebra after perturbation; see Proposition~\ref{descent-dga-perturbation}. 

Such descent results turn out to be instances of the well-known Homotopy Transfer Theorem (HTT) which usually yields only an $A_\infty$-algebra structure on the retract. Our extra hypothesis on the homotopy ensures that the product on the retract is, in fact, associative. We also make a detour to show that with these hypotheses the descended higher $A_\infty$-operations vanish as well and that if one starts with an $A_\infty$-algebra instead of a dg algebra then under an analogous stronger condition on the homotopy, the descended higher operations are much simpler than usual; see Propositions~\ref{descent-dga-Ainfinity} and ~\ref{descent-Ainfinity}.

We are mainly interested in algebra structures on {\em minimal} free resolutions of algebras. 
It is worth noting that few such resolutions are known to carry dg algebra structures. 
And yet having one provides one with a powerful tool. 
Short resolutions are known to have a dg algebra structure \cite{Her-74}, \cite{BuchEis-77}, \cite{KusMil-80}, \cite{Kus-87} but counterexamples of longer ones can be found in \cite{Avr-81}, \cite{Sri-92}, \cite{Sri-96}. 
We refer the reader to \cite{Av-96} for a full discussion. 

In the second portion of the paper, we apply our descent results to obtain a new method of building dg algebra structures that are both concrete and highly symmetric  on some well-known resolutions.
These resolutions, constructed by Buchsbaum and Eisenbud in \cite{BuchEis-75} using Schur modules, are the minimal graded free resolutions $\mathbb L_a$ of the quotients $R/\fm^a$ of a polynomial ring $R$ in $n$ variables by the $a$th powers of the homogeneous maximal ideal. 

These resolutions were shown to have a dg algebra structure by several authors. The first was Srinivasan in 1989 who put an explicit product using Young tableaux; see \cite{Srin89}. Next in 1996 Peeva proved in \cite{Peev-96} that one can place a dg algebra structure on the Eliahou-Kervaire resolution, which applies here since the powers of the maximal ideal are Borel-fixed. 
In \cite{Mae-01} Maeda used the representation theory of the symmetric group $S_n$ to show that in characteristic zero any $S_n$-invariant lift 
of the multiplication on the quotient ring to the resolution is automatically associative, but did not give any explicit formulas. 
Our goal is to find a concrete product that is also  $S_n$-invariant; however, we pay the price of that by having more terms and requiring  coefficients in the rational numbers. 

In contrast to the others, we use our homotopy transfer results to define a product 
which is both explicit and very natural in that it is descended from a truncation of a Koszul complex and naturally $S_n$-invariant. 
One benefit of defining a product using our method is that it  explains why there should really be a dg algebra structure here and is hopefully more canonical and hence useful for further applications. 
Another is that it enables us to define dg algebra homomorphisms between these resolutions. 
Our product works both in characteristic zero and in positive characteristics larger than $n+a$. 
For this, we use a scaled version of the deRham differential to produce a contracting homotopy for the Koszul complex on the variables that satisfies the generalized Leibniz rule; see Lemma~\ref{scaledLeibniz}. It is also worth mentioning that the product that we define can be constructed in a basis free fashion; see Remark~\ref{basisfree}.

More precisely, we prove the following results.
In the first one, using the Perturbation Lemma to obtain the resolution of $\mathbb L_a$ of $R/\fm^a$ as a retract of a complex with a dg algebra structure, we transfer the algebra structure as follows. 
For this, let $\mathbb S=\{ \Lambda^i \otimes S_j \mid i,j \geq 0 \}$ be the totalization of the double complex defined in (\ref{chunk-tautological}), and let $\mathbb X_a$ be the quotient of the dg algebra $\mathbb S$ by the dg ideal $\tr_{\geq a}(\mathbb S)=
    \{ \Lambda^i \otimes S_j \mid j \geq a \}$.

\vspace{2mm}

\noindent 
{\bf Theorem~\ref{thm-L}.}
\emph{
Let $a$ be a positive integer. Suppose that $k$ is a field of characteristic zero or positive characteristic $p\geq a+n$. 
Consider the deformation retract
\[
\mathbb X_a 
\stackrel[p_\infty]{i_\infty}{\longleftrightarrows}
\mathbb L_a
\]
obtained in (\ref{chunk-PL}). 
Defining the product of  $\alpha,\beta\in \mathbb L_a$ by \begin{equation*}
     \alpha \beta 
= p_\infty 
\left( 
i_\infty(\alpha)  i_\infty(\beta)
\right) 
  \end{equation*} 
yields a dg algebra structure on $\mathbb L_a$. Furthermore, with this structure the map $i_\infty$ is a homomorphism of dg algebras.
}
 
\vspace{2mm} 

The second result gives some very natural dg algebra homomorphisms between these resolutions. 

\vspace{2mm}

\noindent 
{\bf Theorem~\ref{dghomomorphism}.}
\emph{
Let $a$ and $b$ be positive integers with $b \geq a$ and let \[
\pi_{b,a}\colon
\mathbb X_b
\onto 
\mathbb X_a
\] 
be the natural surjection. The chain map  
 \[
 f_{b,a}
 =p_{\infty}\pi_{b,a}i_{\infty}
 \colon \mathbb L_b \to \mathbb L_a
 \] 
 is a homomorphism of dg algebras that gives a lifting of the natural surjection $R/\fm^b \lra R/\fm^a$. In particular, the Koszul complex on the variables, which is $\mathbb L_1$,  is a dg algebra over $\mathbb L_b$ for every positive integer $b$. 
 }
 
\emph{
Moreover, if $c\geq b \geq a$ then $f_{c,a}=f_{b,a} f_{c,b}$.}

\vspace{2mm}

%

The paper is organized as follows: In Section~\ref{sec-transfer}, we prove the general statement for descending dg algebra structures along a special deformation retract whose homotopy satisfies the generalized Leibniz rule; see Proposition~\ref{descent-dga}. 
In Section~\ref{sec-Ainfinity}, we make a detour to study the implications of the additional hypothesis of generalized Leibniz-type rules for the Homotopy Transfer Theorems, resulting in Propositions~\ref{descent-dga-Ainfinity} and ~\ref{descent-Ainfinity}. 
In Section~\ref{sec-perturbation}, we recall the well-known Perturbation Lemma and prove a more general version of our dg algebra descent result from Section~\ref{sec-transfer}, namely Proposition~\ref{descent-dga-perturbation}. 
Section~\ref{sec-L} contains our main application, which is to obtain dg algebra structures on the resolutions $\mathbb L_a$ of Buchsbaum and Eisenbud for all $a\geq 1$. In Section~\ref{sec-homomorphism} we obtain dg algebra homomorphisms between these resolutions. 

In this paper, we assume that the  complexes consist of $R$-modules for some commutative ring $R$ and that they are graded homologically, rather than cohomologically; the reader should be aware that most sources for $A_{\infty}$ algebras use the latter instead. 
One more note: 
All double complexes in this paper are considered to be anticommutative as in \cite{Wei-94}, and hence their totalizations do not require any change of sign in the differentials. 
When we speak of a double complex  (for example, when we discuss a dg algebra structure on it) we mean the totalization of it as a complex; which way we are viewing it should be clear from the context. 



 \section{Transfer of dg algebra structures}
 \label{sec-transfer}

In this section, we show how to transfer dg algebra structures along certain homotopy equivalences, namely deformation retracts whose associated homotopy behaves well with respect to products.
We compare this in the next section to the Homotopy Transfer Theorem, via which the dg algebra structure descends to an $A_\infty$-algebra structure.  We also discuss there what happens when the original complex is merely an $A_\infty$-algebra. 

 \begin{dfn}
 \label{def-dga}
  A \emph{differential graded algebra over} $R$ (dg algebra) is a complex $(X, \partial)$ of
  $R$-modules lying in nonnegative degrees equipped with a product given by a chain map     \begin{equation*}
    X \otimes_R X \to X, ~ (\alpha, \beta) \to \alpha\beta
  \end{equation*}
giving an associative and unitary product with $1\in X_0$. 

The fact that the product is a chain map is equivalent to the differentials of $X$ satisfying the \emph{Leibniz rule}:  \begin{equation*}
    \partial(\alpha\beta) = \partial(\alpha)\beta+(-1)^{|\alpha|}\alpha\partial(\beta), ~\text{for
      all}~ \alpha,\beta \in X 
\end{equation*}
where $|\alpha|$ denotes the degree of $\alpha$. In addition, we assume that the product is \emph{strictly graded commutative}, that is,  $\alpha\beta  =(-1)^{|\alpha||\beta|}\beta\alpha$ for all $\alpha, \beta \in X$ and $\alpha^2=0$ if the degree of $\alpha$ is odd.

A \emph{homomorphism} of dg algebras is a morphism of complexes  $\phi \colon X \to X'$ such that $\phi(1)=1$ and $\phi(\alpha\beta)=\phi(\alpha)\phi(\beta)$.
\end{dfn}

We now recall the definitions of the main ingredients for transferring algebra structures.

\begin{dfn}
\label{DR}
A set of \emph{homotopy equivalence data} between two chain complexes is the following set of information: quasi-isomorphisms of complexes 
\[
(X,\partial^X) 
\stackrel[p]{i}{\longleftrightarrows}
(Y,\partial^Y)
\]
with $ip \simeq 1$ via a chosen homotopy $h$ on $X$, that is $ip=1+\partial^Xh+h\partial^X$. 
Note that sometimes a homotopy equivalence is defined to include the condition $pi\simeq 1$, but the version of the Perturbation Lemma in the sources we consulted do not include this condition. 

It is called a {\it deformation retract} if, in addition, one has $pi=1$. 
This condition holds in our applications.

\end{dfn}

Next we see that deformation retracts that satisfy $hi=0$ give a simple way to transfer dg algebra structures from a complex to its summand as long as the associated homotopy satisfies the following additional property, a weakening of the Leibniz rule, which we introduce below.

\begin{dfn}
 \label{def-scaled-Leibniz}
Let $X$ be a complex of $R$-modules equipped with a product, and let $h\colon X \lra X$ be a graded map (but not necessarily a chain map). 
We say that $h$ satisfies the \emph{generalized Leibniz rule} if one has 
\begin{equation*}
    h(\alpha\beta) \subseteq h(\alpha)X+ X h(\beta)
  \end{equation*}
for every $\alpha$ and $\beta$ in $X$. 

For our application, the map $h$ will in fact satisfy a stronger condition, which we call \emph{scaled Leibniz rule}, namely that for every $\alpha ,\beta \in X $ there are  $r,s \in R$ depending only on the degrees of $\alpha$ and $\beta$, respectively, such that \begin{equation*}
    h(\alpha\beta) = r h(\alpha)\beta+ s\alpha h(\beta)
  \end{equation*}
\end{dfn}

We now prove the main result of this section.

\begin{prp}
\label{descent-dga}
Let $X$ be a dg algebra. 
Consider a deformation retract
\[
(X,\partial^X) 
\stackrel[p]{i}{\longleftrightarrows}
(Y,\partial^Y)
\]
with associated homotopy $h$ that satisfies the generalized Leibniz rule and $hi=0$. The following product defines a dg algebra structure on $Y$ 
\[
\alpha  \beta 
\stackrel{\text{ def }}{=} 
p \left( i(\alpha)  i(\beta) \right)
{\text{ for }}~
\alpha, \beta \in Y
\]
where the product inside parentheses is the one in $X$. 

Moreover, with this structure on $Y$, the map $i$ becomes a dg algebra homomorphism.
\end{prp}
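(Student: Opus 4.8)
The plan is to reduce all of the dg algebra axioms on $Y$ to a single observation: that $ip$ fixes every product $i(y)i(y')$ of elements of $\operatorname{im} i$. First I will dispose of the formal axioms, which use neither $hi=0$ nor the generalized Leibniz rule. That the product on $Y$ is a chain map (equivalently, satisfies the Leibniz rule) is a direct computation from $i,p$ being chain maps and the Leibniz rule in $X$:
\[
\partial^Y(\alpha\beta) = p\,\partial^X\big(i(\alpha)i(\beta)\big) = (\partial^Y\!\alpha)\,\beta + (-1)^{|\alpha|}\alpha\,(\partial^Y\!\beta).
\]
Strict graded commutativity is equally formal: applying $p$ to $i(\alpha)i(\beta) = (-1)^{|\alpha||\beta|}i(\beta)i(\alpha)$ gives $\alpha\beta = (-1)^{|\alpha||\beta|}\beta\alpha$, and for $\alpha$ of odd degree $i(\alpha)^2=0$ in $X$ forces $\alpha^2 = p(0) = 0$.

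The crux, and the step I expect to be the main obstacle, is the claim that $ip\big(i(y)i(y')\big) = i(y)i(y')$ for all $y,y'\in Y$. Here both hypotheses enter, and the whole point is that together they force $h$ to annihilate products, and boundaries of products, of elements of $\operatorname{im} i$. I will expand $ip = 1 + \partial^X h + h\partial^X$ and kill the two correction terms. For the first, the generalized Leibniz rule and $hi=0$ give
\[
h\big(i(y)i(y')\big) \subseteq h(i(y))\,X + X\,h(i(y')) = 0,
\]
so $\partial^X h\big(i(y)i(y')\big) = 0$. For the second, since $i$ is a chain map $\partial^X\big(i(y)i(y')\big)$ is a sum of the products $i(\partial^Y y)\,i(y')$ and $i(y)\,i(\partial^Y y')$, each again a product of two elements of $\operatorname{im} i$; the same use of the generalized Leibniz rule and $hi=0$ makes $h$ vanish on each, so $h\partial^X\big(i(y)i(y')\big)=0$. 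Both corrections vanish, proving the claim. Note that only two-factor products are needed, so no induction on word length is required.

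Everything else follows. Multiplicativity of $i$ (the ``moreover'' assertion) is immediate, $i(\alpha\beta) = ip\big(i(\alpha)i(\beta)\big) = i(\alpha)i(\beta)$, and associativity then collapses both triple products onto $p\big(i(\alpha)i(\beta)i(\gamma)\big)$: using $i(\alpha\beta)=i(\alpha)i(\beta)$ one gets $(\alpha\beta)\gamma = p\big(i(\alpha\beta)\,i(\gamma)\big) = p\big(i(\alpha)i(\beta)i(\gamma)\big)$, and symmetrically for $\alpha(\beta\gamma)$, the two agreeing by associativity in $X$. Finally I take $1_Y := p(1_X)$; since $1_X\in\operatorname{im} i$ and $hi=0$ we have $h(1_X)=0$, whence $i(1_Y) = ip(1_X) = 1_X$ (so $i$ is also unital) and $1_Y\cdot\beta = p\big(ip(1_X)\,i(\beta)\big) = p(i(\beta)) = \beta$, with the right-hand unit axiom following from graded commutativity. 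This last bookkeeping is routine once $h(1_X)=0$, so I will record it briefly at the end.
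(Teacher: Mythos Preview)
Your proof is correct and takes essentially the same route as the paper: both hinge on the identity $(\partial^X h + h\partial^X)\big(i(\alpha)i(\beta)\big)=0$, proved by expanding with Leibniz on $\partial^X$ and generalized Leibniz on $h$ so that every surviving term contains a factor $hi$ or $h\partial^X i = hi\partial^Y$; associativity and multiplicativity of $i$ then follow (you derive multiplicativity first and use it for associativity, the paper does the reverse, but the content is identical). One small slip: in your unit argument you invoke ``$1_X\in\operatorname{im} i$'' to obtain $h(1_X)=0$ and hence $i(1_Y)=1_X$, but the conclusion $i(1_Y)=1_X$ is precisely what would place $1_X$ in $\operatorname{im} i$, so the reasoning is circular as written---the paper simply asserts that $p(1)$ is the unit without further comment, so you are no worse off, but you should not present this step as justified by $hi=0$.
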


\begin{proof} 
The Leibniz rule for $Y$ holds without the assumptions that $h$ satisfies the generalized Leibniz rule and $hi=0$. 
Indeed, for any  elements $\alpha, \beta \in Y$, one has 
\begin{align*}
\partial^Y(\alpha  \beta)
&= (\partial^Y  p) (i(\alpha)  i(\beta) ) \\
&= (p  \partial^X) (i(\alpha)  i(\beta) ) \\
&= p \left(\partial^X (i(\alpha))  i(\beta) 
+ (-1)^{|\alpha|} 
i(\alpha)  \partial^X (i(\beta)) \right) \\
&= p \left(i(\partial^Y (\alpha))  i(\beta) \right)
+ (-1)^{|\alpha|} 
p \left( i(\alpha)  i(\partial^Y (\beta)) \right) \\
&=\partial^Y (\alpha)  \beta 
+ (-1)^{|\alpha|} 
\alpha  \partial^Y (\beta) 
\end{align*}
where the first equality is from the definition of the product, the second one holds since $p$ is a chain map, the third one is from the Leibniz rule for $X$, the fourth holds since $i$ is a chain map, the fifth is again from the definition of the product. 

To prove the associativity and the last assertion, we first show that 
\begin{equation}
\label{vanishing}
   (\partial^Xh+h\partial^X) \left ( i (\alpha)  i(\beta)\right )=0
\end{equation}
for all $\alpha, \beta \in Y$. If one expands this expression  using that  $\partial^X$ satisfies the Leibniz rule and $h$ satisfies the generalized Leibniz rule, one sees that every term has a factor with $hi$, which is zero, or a factor with $h\partial^Xi$ which is also zero since $\partial^Xi=i\partial^Y$. 

To verify associativity, take any elements $\alpha, \beta, \gamma \in Y$. One has 
\begin{align*}
(\alpha  \beta)  \gamma 
&=  p \left( i (\alpha)  i(\beta)\right )  \gamma \\
&= p \left( (i  p ( i (\alpha)  i(\beta) ) i(\gamma)  \right) \\
&= p \left( (1+\partial^Xh+h\partial^X) \left ( i (\alpha)  i(\beta)\right )  i(\gamma)  \right ) \\
&= p \left(  \left( i (\alpha)  i(\beta)\right)  i(\gamma)  \right) 
\end{align*}
where the first two equalities are from the definition of the product and  the third one is by the equality $ip=1+\partial^Xh+h\partial^X$.  
A similar argument shows that 
\begin{align*}
 \alpha  (\beta  \gamma) 
 = 
 p \left(   i (\alpha) \left( i(\beta)  i(\gamma) \right) \right)
\end{align*}
and hence associativity holds since it holds for $X$. 

Finally, one can see that $p(1)$ is the identity element of $Y$ and that the product on $Y$ is graded commutative since $p$ and $i$ are graded maps.

To see that the map $i$ is a dg algebra homomorphism, let
 $\alpha, \beta \in Y$. One then has 
\begin{align*}
i(\alpha\beta) 
&=
i p 
\left( 
i(\alpha)i(\beta) 
\right)
\\
&=
(1+\partial^Xh+h\partial^X)
\left( 
i(\alpha)i(\beta) 
\right)\\
&= i(\alpha)i(\beta) 
\end{align*}
where the second equality holds as $i$ and $p$ form a deformation retract and the last one follows from (\ref{vanishing}).
\end{proof}

Note that the condition $hi=0$ in Proposition~\ref{descent-dga} holds when the deformation retract is special; see \ref{specialDR} for definition.

For the application we have in mind in Section~\ref{sec-L}, we need a slightly stronger result since, after we apply the Perturbation Lemma, the new homotopy need no longer satisfy the generalized Leibniz rule even if the original one does; however we show in this case that the descent still works as long as the original deformation retract is special. 
We give this result in Proposition~\ref{descent-dga-perturbation}.

\section{Connections to Homotopy Transfer Theorems}
\label{sec-Ainfinity}

In Proposition~\ref{descent-dga}, we found that a dg algebra structure descends along certain deformation retracts as long as the homotopy satisfies the generalized Leibniz rule, defined in \ref{def-scaled-Leibniz}. In this section, we compare this to the result of the well-known Homotopy Transfer Theorem, via which the dg algebra structure descends to an $A_\infty$-structure, which can have nontrivial higher products even when the descended structure is associative and hence a dg algebra. 
Under the aforementioned additional hypothesis on the homotopy, we compute the higher operations that arise from the descent and find them to vanish after all in Proposition~\ref{descent-dga-Ainfinity}. In Proposition~\ref{descent-Ainfinity}, we also discuss what happens when the original complex is merely an $A_\infty$-algebra under a similar, but much stronger hypothesis on the homotopy. 

We note that most sources for the Homotopy Transfer Theorems work with dg algebras and $A_\infty$-algebras over a field of characteristic zero. 
However, these are known to hold over a commutative ring $R$ as long as one  makes some freeness assumptions. For simplicity we assume in this section that $R$ is a field of characteristic 0 (or, more generally, that we are in a setting in which the Homotopy Transfer Theorems are known to hold). 
However, we should point out that our transfer results Proposition~\ref{descent-dga} and ~\ref{descent-dga-perturbation} do not require any such hypotheses. 

We begin by recalling both the definition of an $A_\infty$-algebra and the Homotopy Transfer Theorem for a dg algebra. 
The concept of an $A_\infty$-algebra was introduced by Stasheff in \cite{Sta-63} in his study of loop spaces, where the natural product is only associative up to homotopy. 
For some expositions of this topic, see \cite{Kel-2001}, \cite{Kel-06}, and \cite{Lef-03}.

\begin{dfn}\label{def-Ainfinity}
An \emph{$A_\infty$-algebra} over a ring $R$ is a complex $A$ of $R$-modules together with $R$-multilinear maps of degree $n-2$ 
\[
m_n \colon A^{\otimes n} \to A
\] 
for each $n\geq 1$, called operations or multiplications, satisfying the following relations, called the Stasheff identities. 
\begin{itemize}
\item 
The first operation is simply the differential:  
\[
m_1=\partial_A.
\]
\item 
The second operation satisfies the Leibniz rule:  
\[
m_1 m_2 = m_2 (m_1 \otimes 1 + 1 \otimes m_1).
\]
\item 
The third one verifies that $m_2$ is associative up to  the homotopy $m_3$: \begin{align*}
m_2(&1 \otimes m_2 - m_2 \otimes 1)  \\ &=m_1m_3 +m_3(m_1 \otimes 1 \otimes 1+1 \otimes m_1 \otimes 1+1 \otimes 1 \otimes m_1)
\end{align*}
Note that the left hand side is the obstruction to associativity for $m_2$ and that the right hand side is the boundary of $m_3$ in $\Hom_R(A^{\otimes 3},A)$. 
\\[-1mm]
\item 
More generally, for $n \geq 1$, we have
\[
\sum_{s=1}^n
\sum_{r,t\ge 0}  (-1)^{r+st} 
m_{r+1+t} (1^{\otimes r} \otimes m_s \otimes 1^{\otimes t}) = 0
\]
where the sums are taken over the values of $r,s,t$ with $r + s + t=n$. 
\end{itemize}

Note that when one applies the maps in each formula above 
to an element, one should use the Koszul sign rule: 
For graded maps $f$ and $g$, one has 
\[
(f \otimes g)(x \otimes y) = (-1)^{|g||x|}f(x) \otimes g(y)
\]
for homogeneous elements $x$ and $y$, where $|w|$ denotes the degree of $w$ whether it is a map or homogenous element. 
\end{dfn}

Recall that we are using homological notation; in cohomological notation the degree of $m_n$ would be $2-n$ rather than $n-2$. 
Note also that for the signs we follow the conventions in Getzler-Jones \cite{GetJon-90}; see, for example, the survey by Keller \cite{Kel-2001}.

\begin{rmk}\label{A-dga}
Note that an $A_\infty$-algebra  whose operations $m_n$ are zero for all $n\ge 3$ is a dg algebra where the product is given by $m_2$. 
Conversely, a dg algebra can be given the structure of an $A_\infty$-algebra by setting $m_{\geq 3}=0$. 

However, $m_1$ and $m_2$ can usually be extended to other $A_\infty$-algebra structures. Indeed, one can have nonzero higher operations for which the boundary of $m_3$ in $\Hom_R(A^{\otimes 3},A)$ is equal to zero and hence $A$ is still associative. \end{rmk}

The Homotopy Transfer Theorems were first proved by Kadeishvili in \cite{Kad-80} and \cite{Kad-82}. We recall them in (\ref{HTT}) and (\ref{HTT-Ainfinity}). For this, we follow the exposition in Vallette's survey \cite{Vall-14}. 
We note that our signs are the opposite of those in his survey since his homotopy is the negative of ours (he has $1-ip=\partial^Xh+h\partial^X$, rather than $ip-1$). This should not make a difference as the precise signs do not matter for our proofs.

\begin{ntn} 
We introduce the planar rooted tree notation from Vallette's survey to represent these products pictorially as this will make it easier to describe the Homotopy Transfer Theorem. 
All the diagrams are read from the top down, that is, the inputs are thought of as being entered on the top and the multi-intersections correspond to the higher products $m_n$ being performed. 
Further, wherever a letter appears in such a diagram, one applies the corresponding map at that point. 
Again, the sign rule described in Definition~\ref{def-Ainfinity} is understood to be in effect. 

First, the higher operation $m_n$ is drawn as follows.

\setlength{\unitlength}{0.8cm}
\begin{picture}(12,4)
\thicklines
\put(7,2){\line(0,-1){1.2}}
\put(5.2,3){1}
\put(5.8,3){2}
\put(6.7,3){$\cdots$}
\put(8.6,3){$n$}
\put(7,2){\line(5,2){1.6}}
\put(7,2){\line(5,4){1}}
\put(7,2){\line(-5,2){1.6}}
\put(7,2){\line(-5,4){1}}
\put(7,2){\line(0,1){.8}}
\end{picture}

In this notation, the properties of Leibniz rule and associativity can be drawn as follows.

\setlength{\unitlength}{0.8cm}
\begin{picture}(15,3.5)
\thicklines
\put(1,1.5){\line(0,-1){1}}
\put(1,1.5){\line(5,6){.7}}
\put(1,1.5){\line(-5,6){.7}}
\put(2.5,1){=}
\put(0.8,0){$\partial$}
\put(3.1,2.6){$\partial$}
\put(4,1.5){\line(0,-1){1}}
\put(4,1.5){\line(5,6){.7}}
\put(4,1.5){\line(-5,6){.7}}
\put(5.1,1){+}
\put(7.2,2.5){$\partial$}
\put(6.5,1.5){\line(0,-1){1}}
\put(6.5,1.5){\line(5,6){.7}}
\put(6.5,1.5){\line(-5,6){.7}}
\put(10.255,2.4){\line(5,6){.7}}
\put(11,1.5){\line(0,-1){1}}
\put(11,1.5){\line(5,6){.7}}
\put(11,1.5){\line(-5,6){1.4}}
\put(12.3,1){=}
\put(14.55,2.4){\line(-5,6){.7}}
\put(13.8,1.5){\line(0,-1){1}}
\put(13.8,1.5){\line(5,6){1.4}}
\put(13.8,1.5){\line(-5,6){.7}}
\end{picture}

\noindent 
To justify the first equation, note that $\partial\otimes 1$ will produce no sign when applied to the input $x \otimes y$ since $|1|=0$, but $1\otimes \partial$ will have the sign $(-1)^{|x|}$.
\end{ntn} 

We now recall the Homotopy Transfer Theorem \cite{Kad-80} that allows one to transfer a dg algebra structure along a deformation retract yielding an $A_\infty$-structure on the retract. 
For this, we will define the descended higher operations using the tree notation introduced above. 

\begin{ipg}{\bf Homotopy Transfer Theorem for dg algebras.}
\label{HTT}
Let 
\[
(X,\partial^X) 
\stackrel[p]{i}{\longleftrightarrows}
(Y,\partial^Y)
\]
be a deformation retract with associated homotopy $h$ where $X$ is a dg algebra. 
As in Remark~\ref{A-dga}, one considers $X$ an $A_\infty$-algebra with $m_1$ equal to the differential, $m_2$ equal to the dg algebra product on $X$, and  $m_n^X=0$ for $n\geq 3$.

The Homotopy Transfer Theorem gives an $A_\infty$-structure on $Y$ as follows: 
First set $m_1^Y=\partial^Y$. 
For $n\geq 2$, the $n$th  operation $m_n^Y$ is defined as

\setlength{\unitlength}{0.8cm}
\begin{picture}(12,6)
\thicklines
\put(2,2){\line(0,-1){1.2}}
\put(2,2){\circle*{0.2}}
\put(0.2,3){1}
\put(.8,3){2}
\put(1.7,3){$\cdots$}
\put(3.6,3){$n$}
\put(2,2){\line(5,2){1.6}}
\put(2,2){\line(5,4){1}}
\put(2,2){\line(-5,2){1.6}}
\put(2,2){\line(-5,4){1}}
\put(2,2){\line(0,1){.8}}
\put(4,1.5){$\coloneqq $}
\put(5,1.5){$\mathlarger  {\mathlarger{\mathlarger{\mathlarger{\sum_{PBT_n} }}}}$}
\put(6.6,1.5){$\pm$}
\put(8.75,4){$i$}
\put(7.1,4){$i$}
\put(8,3){\line(5,-6){.4}}
\put(8.5,2.3){$h$}
\put(8.8,2.1){\line(5,-6){.4}}
\put(9.22,1.57){\line(5,6){.4}}
\put(9.22,1.57){\line(0,-1){.6}}
\put(8,3){\line(5,6){.7}}
\put(9.7,2.3){$h$}
\put(9.48,4){$i$}
\put(10,2.6){\line(5,6){.6}}
\put(8,3){\line(-5,6){.7}}
\put(10.3,3){\line(-5,6){.7}}
\put(10.6,3.5){$h$}
\put(11,3.85){\line(5,6){1}}
\put(11.4,4.32){\line(-5,6){.6}}
\put(12,5.2){$i$}
\put(10.6,5.2){$i$}
\put(9.2,.5){$p$}
\end{picture}

\noindent 
where the left hand side is the notation for $m_n^Y$ 
and where the sum is over $PBT_n$, the set of all planar binary rooted trees with $n$ leaves,
and the tree diagram pictured on the right is just a representative example of such a tree. 
The pattern of maps appearing on each tree is meant to indicate that every product is followed by an application of $h$, except for the last one, where instead $p$ is applied. 
The actual signs, indicated simply as $\pm$ above, are defined in the various sources quoted, but we shall not need them for our results. 
Again, in applying the maps in trees, the sign rule described in Definition~\ref{def-Ainfinity} is understood to be in effect. 

In particular, $m_2^Y$  and $m_3^Y$ are given by

\setlength{\unitlength}{0.8cm}
\begin{picture}(15,5)
\thicklines
\put(0.5,1.8){\line(0,-1){1}}
\put(0.5,1.8){\circle*{0.2}}
\put(0.5,1.8){\line(5,6){.7}}
\put(0.5,1.8){\line(-5,6){.7}}
\put(1.8,1.5){=}
\put(3.9,2.9){$i$}
\put(2.3,2.9){$i$}
\put(3.2,1.8){\line(0,-1){1}}
\put(3.2,1.8){\line(5,6){.7}}
\put(3.2,1.8){\line(-5,6){.7}}
\put(3.1,.5){$p$}
\put(4.6,1.5){$\text{and}$}
\put(6.7,2){\line(0,-1){1}}
\put(6.7,2){\circle*{0.2}}
\put(6.7,2){\line(5,6){.7}}
\put(6.7,2){\line(-5,6){.7}}
\put(6.7,2){\line(0,1){.9}}
\put(8.2,1.5){$=$}
\put(10,2.3){$h$}
\put(9.48,1.57){\line(-5,6){.7}}
\put(9.48,1.57){\line(5,6){.4}}
\put(9.48,1.57){\line(0,-1){.7}}
\put(10.3,2.6){\line(5,6){1.1}}
\put(10.7,3.1){\line(-5,6){.7}}
\put(8.7,2.6){$i$}
\put(11.5,4.1){$i$}
\put(9.9,4.1){$i$}
\put(9.5,.5){$p$}
\put(11.5,1.5){$-$}
\put(13.7,4.1){$i$}
\put(12.1,4.1){$i$}
\put(13,3){\line(5,-6){.4}}
\put(13.5,2.3){$h$}
\put(13.8,2.1){\line(5,-6){.4}}
\put(14.22,1.57){\line(5,6){.7}}
\put(14.22,1.57){\line(0,-1){.7}}
\put(13,3){\line(5,6){.7}}
\put(14.9,2.6){$i$}
\put(13,3){\line(-5,6){.7}}
\put(14.2,.5){$p$}
\end{picture}

\noindent 
where the signs in the expression for $m_3^Y$ are the opposite of those in \cite{Vall-14} since, as we recall, his homotopy is the negative of ours. 
\end{ipg}

Under the hypotheses in this paper, we can show that the descent actually yields an $A_\infty$-algebra with all higher operations $m_{\geq 3}$ equal to zero. Proposition~\ref{descent-dga} does yield a dg algebra, and so one could extend it to an $A_\infty$-algebra by defining the higher operations $m_{\geq 3}$ equal to zero, but the Homotopy Transfer Theorem (\ref{HTT}) also gives a set of higher operations, which may not be the same. Here we prove that those vanish as well. 

\begin{prp}
\label{descent-dga-Ainfinity}
Let $X$ be a dg algebra. Consider a  deformation retract
\[
(X,\partial^X) 
\stackrel[p]{i}{\longleftrightarrows}
(Y,\partial^Y)
\]
with associated homotopy $h$ that satisfies the generalized Leibniz rule and $hi=0$. 
Then the $A_\infty$-algebra structure on $Y$ obtained from the dg algebra structure on $X$ via \ref{HTT} has trivial higher operations, that is, $m_n^Y=0$ for all $n\geq 3$.
\end{prp}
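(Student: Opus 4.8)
The plan is to read off the higher operations from the explicit tree formula in (\ref{HTT}) and to show that each individual tree in the sum already evaluates to the zero map, so that the whole sum vanishes regardless of the signs (which, as noted in (\ref{HTT}), play no role here). The single input I need is the observation, already implicit in the proof of Proposition~\ref{descent-dga}, that the map $Y\otimes Y\to X$ sending $\alpha\otimes\beta$ to $h(i(\alpha)i(\beta))$ is identically zero.

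First I would record this vanishing at the level of maps. Since $h$ satisfies the generalized Leibniz rule, one has $h(i(\alpha)i(\beta))\subseteq h(i(\alpha))X + X\,h(i(\beta))$, and both summands are $0$ because $hi=0$. Hence $h\circ m_2^X\circ(i\otimes i)=0$ as a map $Y\otimes Y\to X$. This is the same mechanism that produced equation (\ref{vanishing}), except that now I do not even invoke $\partial^X$: the factor $hi$ kills each branch by itself.

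Next I would analyze a single planar binary rooted tree $T\in PBT_n$ contributing to $m_n^Y$. Recall from (\ref{HTT}) that in evaluating $T$ one applies $i$ at each of the $n$ leaves, the product $m_2^X$ at each internal vertex, and $h$ immediately after every internal vertex except the root, where $p$ is applied instead. Every planar binary rooted tree with $n\geq 2$ leaves has a \emph{cherry}, that is, an internal vertex both of whose children are leaves; and when $n\geq 3$ the root has $n>2$ leaves beneath it, so no cherry is the root. Thus any such $T$ contains a cherry that is followed by an application of $h$. At that cherry the two incoming edges carry exactly $i(\alpha)$ and $i(\beta)$ for two of the inputs $\alpha,\beta\in Y$, so the map computed there is $\alpha\otimes\beta\mapsto h(i(\alpha)i(\beta))$, which vanishes by the first step.

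Finally, because $m_2^X$ is bilinear and $p$ is linear, a tree one of whose interior vertices computes the zero map is itself the zero map: the $h$-output of the cherry enters, as one tensor factor, into the next product down the tree, and bilinearity annihilates that entire branch, after which feeding $0$ through the remaining products and the final $p$ gives $0$. Hence every $T\in PBT_n$ contributes $0$, and summing over $PBT_n$ yields $m_n^Y=0$ for all $n\geq 3$. The only point requiring care is this last propagation argument — justifying that a local vanishing at a cherry forces vanishing of the whole composite tree — but it is routine given bilinearity; everything else rests on the elementary combinatorial fact that a binary tree with $n\geq 3$ leaves always has a non-root cherry.
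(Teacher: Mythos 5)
Your proof is correct and takes essentially the same route as the paper's: the paper likewise argues that every planar binary rooted tree contributing to $m_n^Y$ for $n\geq 3$ contains a factor of the form $h\left(i(\alpha)i(\beta)\right)$, which vanishes by the generalized Leibniz rule together with $hi=0$, so each term is zero regardless of signs. Your write-up simply makes explicit two points the paper leaves implicit --- the combinatorial existence of a non-root cherry and the bilinear propagation of the local vanishing through the rest of the tree.
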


\begin{proof}
Recall from \ref{HTT} that the operations $m_n^Y$ for $n\geq 3$ descended from the dg algebra structure on $X$ are signed sums of elements described by planar binary rooted trees with $n$ leaves. 
The signs do not matter as we prove that every term equals zero. 
Indeed, each term always includes a factor of the form $h(i(\alpha)i(\beta))$ for some $\alpha, \beta \in Y$ (this will be nested inside other maps $i$, $h$ and $p$ and products from $X$). This vanishes as $h$ satisfies the generalized Leibniz rule and $hi=0$ holds. Hence these higher operations $m_n$ all vanish. 
\end{proof}

What if one begins with a complex $X$ that is an $A_\infty$-algebra rather than a dg algebra? 
First we recall the version of the Homotopy Transfer Theorem for this situation from \cite{Kad-82}, namely a more general version  that allows one to transfer an $A_\infty$-structure along a deformation retract yielding an $A_\infty$-structure on the retract. 
We again use the tree notation introduced above.

\begin{ipg}{\bf Homotopy Transfer Theorem for $A_\infty$-algebras.} 
\label{HTT-Ainfinity}
Let 
\[
(X,\partial^X) 
\stackrel[p]{i}{\longleftrightarrows}
(Y,\partial^Y)
\]
be a deformation retract with associated homotopy $h$ where $X$ is an $A_\infty$-algebra. 
The Homotopy Transfer Theorem gives an $A_\infty$-structure on $Y$ as follows: 
First set $m_1^Y=\partial^Y$. 
For $n\geq 2$, the operation $m_n^Y$ is defined as  

\setlength{\unitlength}{0.8cm}
\begin{picture}(12,6)
\thicklines
\put(2,2){\line(0,-1){1.2}}
\put(2,2){\circle*{0.2}}
\put(0.2,3){1}
\put(.8,3){2}
\put(1.7,3){$\cdots$}
\put(3.6,3){$n$}
\put(2,2){\line(5,2){1.6}}
\put(2,2){\line(5,4){1}}
\put(2,2){\line(-5,2){1.6}}
\put(2,2){\line(-5,4){1}}
\put(2,2){\line(0,1){.8}}
\put(4,1.5){$\coloneqq $}
\put(5,1.5){$\mathlarger  {\mathlarger{\mathlarger{\mathlarger{\sum_{PT_n} }}}}$}
\put(6.6,1.5){$\pm$}
\put(8.75,4){$i$}
\put(7.1,4){$i$}
\put(7.9,4){$i$}
\put(8,3){\line(5,-6){.4}}
\put(8.5,2.3){$h$}
\put(8.8,2.1){\line(5,-6){.4}}
\put(9.22,1.57){\line(5,6){.4}}
\put(9.22,1.57){\line(0,-1){.7}}
\put(8,3){\line(5,6){.7}}
\put(9.7,2.3){$h$}
\put(9.48,4){$i$}
\put(10,2.6){\line(5,6){.6}}
\put(8,3){\line(-5,6){.7}}
\put(8,3){\line(0,1){.8}}
\put(10.3,3){\line(-5,6){.7}}
\put(10.6,3.5){$h$}
\put(11,3.85){\line(5,6){.4}}
\put(11.4,4.32){\line(5,2){1.3}}
\put(11.4,4.32){\line(5,4){.7}}
\put(11.4,4.32){\line(-5,2){1.3}}
\put(11.4,4.32){\line(-5,4){.7}}
\put(11.4,4.32){\line(0,1){.6}}
\put(12,5.1){$i$}
\put(10.6,5.1){$i$}
\put(11.3,5.1){$i$}
\put(9.8,5.1){$i$}
\put(12.8,5.1){$i$}
\put(9.2,.5){$p$}
\end{picture}

\noindent 
where the left hand side is the notation for $m_n^Y$ 
and where the sum is over $PT_n$, the set of all planar (not necessarily binary) rooted trees with $n$ leaves,
and the tree diagram pictured on the right is just a representative example of such a tree, 
where the higher products are all occurring in $X$. 
Once again, the pattern is that every such product is followed by an application of $h$, except for the last one, where instead $p$ is applied. 
Again, the actual signs are defined in the various sources quoted, but we shall not need them for our results. 
\end{ipg}

Next we impose analogous but much stronger conditions on the homotopy when $X$ is merely an $A_\infty$-algebra, rather than a dg algebra. 
We do not have any example that satisfies this condition, but include this result for completeness in case it could be useful. 
We say that the homotopy $h$ on $X$ satisfies the \emph{generalized Leibniz rule for an $A_\infty$-algebra} if for every $n\geq 2$ one has 
\begin{equation*}
    h(m_n(a_1 \otimes \dots \otimes a_n)) \subseteq 
    \sum_{i=1}^n 
    m_n(X  \otimes \dots \otimes h(a_i) \otimes \dots \otimes X)
  \end{equation*}
where $h(a_i)$ is the $i$th factor and the other factors are $X$. 
Under this hypothesis, we can show that the formulas for the descended operations via the Homotopy Transfer Theorem for $A_\infty$-algebras (see \ref{HTT-Ainfinity}) are much simpler than usual (they are just the ones induced by going back and forth along the homotopy equivalence).

\begin{prp}
\label{descent-Ainfinity}
Let $X$ be an $A_\infty$-algebra with operations $m^X_n$ for $n\geq 1$.  
Consider a deformation retract
\[
(X,\partial^X) 
\stackrel[p]{i}{\longleftrightarrows}
(Y,\partial^Y)
\]
with associated homotopy $h$ that satisfies the generalized Leibniz rule for an $A_\infty$-algebra and $hi=0$. 
Then the $A_\infty$-algebra structure on $Y$ obtained from the $A_\infty$-algebra structure on $X$ via \ref{HTT-Ainfinity} has operations given by 
\[
m_n^Y=p\,m_n^X(i\otimes  \dots\otimes i)
\] 
for all $n\geq 1$.
\end{prp}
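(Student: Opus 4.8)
The plan is to read $m_n^Y$ directly off the Homotopy Transfer Theorem formula in \ref{HTT-Ainfinity}, which presents it as a signed sum over the set $PT_n$ of planar rooted trees with $n$ leaves: on each tree one applies $i$ at every leaf, the operation $m_k^X$ at each internal vertex having $k$ children, then $h$ after every internal vertex except the root, and finally $p$ at the root. The first step is to single out the \emph{corolla} $c_n\in PT_n$, the unique tree whose one internal vertex has all $n$ leaves attached directly to it. Since that vertex is the root, it is followed by $p$ rather than $h$, so its contribution is exactly $p\,m_n^X(i\otimes\dots\otimes i)$ (with sign $+1$ in these conventions). The whole content of the proposition is then the claim that every other tree in $PT_n$ contributes zero, so that only the corolla survives.

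To kill the remaining terms I would argue as in the proof of Proposition~\ref{descent-dga-Ainfinity}, exhibiting in each a vanishing factor. Fix a tree $T\in PT_n$ with at least two internal vertices and choose a \emph{topmost} internal vertex $v$, i.e.\ one all of whose children are leaves; such a $v$ exists since any internal vertex of maximal depth has only leaves as children. If $T\neq c_n$ then $v$ cannot be the root: were the root topmost, all of its children would be leaves and $T$ would already be the corolla. Hence in the tree diagram the application of $m_k^X$ at $v$ is immediately followed by $h$, and because the children of $v$ are leaves its inputs are of the form $i(\alpha_1),\dots,i(\alpha_k)$ with $\alpha_j\in Y$. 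The factor produced locally at $v$ is therefore
\[
h\bigl(m_k^X(i(\alpha_1)\otimes\dots\otimes i(\alpha_k))\bigr).
\]
By the generalized Leibniz rule for an $A_\infty$-algebra this lies in $\sum_{j=1}^{k} m_k^X(X\otimes\dots\otimes h(i(\alpha_j))\otimes\dots\otimes X)$, and every summand vanishes because $hi=0$ forces $h(i(\alpha_j))=0$. Thus the factor is zero, and as it occurs (nested inside the remaining maps and operations of $T$) as a factor of the whole term, the contribution of $T$ is zero irrespective of its sign.

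Consequently only the corolla survives, yielding $m_n^Y=p\,m_n^X(i\otimes\dots\otimes i)$ for all $n\geq 2$; for $n=1$ the assertion reduces to the defining equality $m_1^Y=\partial^Y=p\,\partial^X i=p\,m_1^X(i)$, using that $i$ is a chain map and $pi=1$, while for $n=2$ the corolla is the only planar tree with two leaves so the formula is immediate. The single point needing care, and the main obstacle, is the combinatorial claim that every non-corolla tree possesses a topmost vertex distinct from the root, for it is precisely there that an $h$ sits directly above a bank of $i$'s and the generalized Leibniz rule together with $hi=0$ can be brought to bear; once that placement is secured, the vanishing is uniform and immediate across all trees.
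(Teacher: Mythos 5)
Your proof is correct and follows essentially the same route as the paper: expand $m_n^Y$ via the tree sum in \ref{HTT-Ainfinity}, observe that the corolla contributes exactly $p\,m_n^X(i\otimes\dots\otimes i)$, and kill every other tree by combining the generalized Leibniz rule for $A_\infty$-algebras with $hi=0$. Your explicit ``topmost internal vertex distinct from the root'' argument is a welcome sharpening of the paper's briefer claim that every non-corolla term contains a vanishing factor, but it is the same underlying mechanism.
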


\begin{proof}
Note that in \ref{HTT-Ainfinity}, it follows from the construction and the properties of a deformation retract that 
\[
m_1^Y=\partial^Y=p\,m_1^X \,i
~{\text{ \ and \ }}~
m_2^Y=p \, m_2^X (i\otimes i).
\] 
This covers the cases $n=1,2$. 

Recall from \ref{HTT-Ainfinity} that the operations $m_n^Y$ for $n\geq 3$ descended from the dg algebra structure on $X$ are signed sums of elements described by planar rooted trees with $n$ leaves. 
The signs do not matter as all of the terms vanish except one. Indeed, expanding using that $hi=0$ and the generalized Leibniz rule for an $A_\infty$-algebra leaves only the desired term as that is the only one given by a tree with only one (higher) operation, hence simply followed by an application of $p$ and not involving the homotopy $h$; this term is known to be positive. 
\end{proof}

 \section{Transfer of dg algebra structures and the Perturbation Lemma}
 \label{sec-perturbation}

The second main aim of this paper is to use descent along a deformation retract to find a dg algebra structure on a well known complex, which we do in the next section. 
Building this retract involves a homological tool called the Perturbation Lemma. 
In this section we extend the descent result in Proposition~\ref{descent-dga} to perturbations of the original setting, resulting in Proposition~\ref{descent-dga-perturbation}. 
One can similarly extend Propositions~\ref{descent-dga-Ainfinity} and \ref{descent-Ainfinity}; see Remark~\ref{analogous}. 

The Perturbation Lemma generates new homotopy equivalences from initial ones; in general the aim is to modify the differentials of the complexes while maintaining a homotopy equivalence. 
For more details, the reader may consult Crainic's exposition in \cite{Cra-04} and also \cite{DyMu-13} where Dyckerhoff and Murfet develop the lemma for the analogous case of matrix factorizations. 
The Perturbation Lemma is especially useful for double complexes where one can temporarily forget either the horizontal or the vertical differentials and add them back in later as the ``perturbation"; this is the context in which we will apply it in Section~\ref{sec-L}. 

We define some terminology we use in stating the Perturbation Lemma. 

\begin{dfn}
\label{def-perturbation}
Consider a set of homotopy equivalence data 
\begin{equation*}
   (X,\partial^X) 
\stackrel[p]{i}{\longleftrightarrows}
(Y,\partial^Y)
\end{equation*}
with associated homotopy $h$. 
A \emph{perturbation} is a map $\delta$ on $X$ of the same degree as the differential $\partial^X$ such that $(\partial^X+\delta)^2=0$, that is,  $\partial^X+\delta$ is again a differential. 
The perturbation $\delta$ is called {\it small} if $1-\delta h$ is invertible. 
Most commonly, this happens when $\delta h$ is elementwise nilpotent  for then one has 
\[
(1-\delta h)^{-1} 
= \sum_{j=0}^\infty (\delta h)^j 
= 1+(\delta h)+(\delta h)^2+\cdots
\]
where the sum is finite on each element of $X$.  
\end{dfn}

\begin{dfn}\label{def-infinity}
Consider a set of homotopy equivalence data 
\begin{equation*}
   (X,\partial^X) 
\stackrel[p]{i}{\longleftrightarrows}
(Y,\partial^Y)
\end{equation*}
with associated homotopy $h$. Let $\delta$ be a small perturbation on $X$, and 
let  $A=(1-\delta h)^{-1} \delta$. We define the following new data 
\begin{equation*}
   (X,\partial_\infty^X) 
\stackrel[p_\infty]{i_\infty}{\longleftrightarrows} (Y,\partial_\infty^Y)
\end{equation*}
where 
\begin{equation*}
   i_\infty=i+hAi, ~ p_\infty=p+pAh, 
   ~ \partial_\infty^X=\partial^X+\delta,
   \text{ and}~ \partial_\infty^Y=\partial^Y+pAi
\end{equation*}
and set 
\begin{equation*}
   h_\infty=h+hAh  
\end{equation*}
Note that when $\delta h$ is elementwise nilpotent, then the formulas can be rewritten as follows. 
\begin{align*}
i_\infty&=(1+(h\delta)+(h\delta)^2+\cdots)i
\\
p_\infty&=p(1+(\delta h)+(\delta h)^2+\cdots)
\\
h_\infty&=h(1+(\delta h)+(\delta h)^2+\cdots)
\\
\partial^Y_\infty
&=\partial^Y+p\delta i_\infty 
\\
&=\partial^Y+p_\infty\delta i
\end{align*}
\end{dfn}

\begin{dfn}
\label{special}
A \emph{special deformation retract} is a deformation retract that satisfies the following equations 
\begin{equation}
\label{specialDR}
hi=0, ~
ph=0, ~
h^2=0. 
\end{equation}
These ensure that the property $pi=1$ is inherited by the perturbed data.

As described in \cite{Cra-04}, any deformation retract can be converted into a special one by modifying the chosen homotopy in several steps; the drawback is that when seeking explicit formulas the resulting maps  become more complicated. Fortunately, for our application, the deformation retracts involved are all special. 
\end{dfn}

With this terminology, we are now ready to state the  Perturbation Lemma. 

\begin{ipg}{\bf Perturbation Lemma.}
\label{perturbation}
Given a set of homotopy equivalence data
\begin{equation*}
   (X,\partial^X) 
\stackrel[p]{i}{\longleftrightarrows}
(Y,\partial^Y)
\end{equation*}
 with associated homotopy  $h$, its perturbation via a small perturbation gives a set of homotopy equivalence data
\begin{equation*}
   (X,\partial_\infty^X)  
\stackrel[p_\infty]{i_\infty}{\longleftrightarrows} (Y,\partial_\infty^Y)
\end{equation*}
with associated homotopy $h_\infty$. 

If, furthermore, the original homotopy equivalence  is a special deformation retract then so is the resulting one, that is, 
\[
p_\infty i_\infty =1, \ 
h_\infty ^2=0, \ 
~  h_\infty i_\infty =0, \ 
~ {\text{and}} ~
p_\infty h_\infty =0, 
\]
\end{ipg}

Recall that by Proposition~\ref{descent-dga}, given a special  deformation retract whose homotopy satisfies the generalized Leibniz rule, a dg algebra structure can be transferred along it. 
One might want to use the Perturbation Lemma to obtain new deformation retracts to which one could apply this proposition. However, even if the original homotopy satisfies the generalized Leibniz rule, the new one may no longer satisfy it. 
We remedy this by proving an extension of the descent results as follows. 

\begin{prp} 
\label{descent-dga-perturbation}
Let $X$ be a dg algebra. 
Consider a special deformation retract
\[
(X,\partial^X) 
\stackrel[p]{i}{\longleftrightarrows}
(Y,\partial^Y)
\]
with associated homotopy $h$ that satisfies the generalized Leibniz rule, and 
let $\delta$ be a small perturbation on $X$. 

If the perturbed complex $(X,\partial^X_\infty)$ from Lemma~\ref{perturbation} remains a dg algebra via the same product (equivalently, if $\delta$ satisfies the Leibniz rule), then the following product defines a dg algebra structure on the perturbed complex $(Y,\partial^Y_\infty)$
\[
\alpha  \beta 
\stackrel{\text{ def }}{=} 
p_\infty \left( i_\infty(\alpha)  i_\infty(\beta) \right)
{\text{ for }}~
\alpha, \beta \in Y
\]
where the product inside parentheses is the one in $X$. 

Moreover, with this structure on $Y$, the map $i_\infty$ becomes a dg algebra homomorphism.
\end{prp}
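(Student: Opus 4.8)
The plan is to reduce to the argument of Proposition~\ref{descent-dga}. The obstruction is that, although the perturbed data $(X,\partial^X_\infty)\stackrel[p_\infty]{i_\infty}{\longleftrightarrows}(Y,\partial^Y_\infty)$ is again a special deformation retract by Lemma~\ref{perturbation}, the perturbed homotopy $h_\infty$ need \emph{not} satisfy the generalized Leibniz rule. The key idea is that one does not actually need it to: the crucial vanishing statement that drives the proof of Proposition~\ref{descent-dga} can still be established, this time by passing back to the original homotopy $h$, which does satisfy the generalized Leibniz rule.

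First I would verify the Leibniz rule for the new product on $(Y,\partial^Y_\infty)$. This is formal and uses none of the special hypotheses: since $p_\infty$ and $i_\infty$ are chain maps for the perturbed differentials and $(X,\partial^X_\infty)$ is a dg algebra by assumption, the same computation as in Proposition~\ref{descent-dga}, with $\partial^X,\partial^Y,i,p$ replaced throughout by $\partial^X_\infty,\partial^Y_\infty,i_\infty,p_\infty$, yields $\partial^Y_\infty(\alpha\beta)=\partial^Y_\infty(\alpha)\beta+(-1)^{|\alpha|}\alpha\,\partial^Y_\infty(\beta)$.

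The heart of the matter is the analogue of the vanishing (\ref{vanishing}), namely
\[
(\partial^X_\infty h_\infty + h_\infty\partial^X_\infty)\bigl(i_\infty(\alpha)\,i_\infty(\beta)\bigr)=0\qquad\text{for all }\alpha,\beta\in Y.
\]
The observation I would isolate first is that $h\,i_\infty=0$: indeed $h\,i_\infty=h(i+hAi)=hi+h^2Ai=0$, using $hi=0$ and $h^2=0$ from the special deformation retract. Consequently, applying the generalized Leibniz rule for the \emph{original} homotopy $h$ to the product gives $h\bigl(i_\infty(\alpha)\,i_\infty(\beta)\bigr)\subseteq h(i_\infty(\alpha))\,X+X\,h(i_\infty(\beta))=0$. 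Since $h_\infty=h+hAh$, both of its summands factor through $h$ applied to that product, so $h_\infty\bigl(i_\infty(\alpha)\,i_\infty(\beta)\bigr)=0$ as well. This already kills the first half of the vanishing, and it kills the second half too: expanding $\partial^X_\infty\bigl(i_\infty(\alpha)\,i_\infty(\beta)\bigr)$ by the Leibniz rule for $\partial^X_\infty$ and using that $i_\infty$ is a chain map ($\partial^X_\infty i_\infty=i_\infty\partial^Y_\infty$) rewrites it as a sum of two products of the same shape $i_\infty(-)\,i_\infty(-)$, on each of which $h_\infty$ vanishes by what was just shown.

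With the vanishing in hand, the remaining assertions follow verbatim from the proof of Proposition~\ref{descent-dga}, now using the perturbed homotopy relation $i_\infty p_\infty=1+\partial^X_\infty h_\infty+h_\infty\partial^X_\infty$ together with $p_\infty i_\infty=1$ from Lemma~\ref{perturbation}. Associativity reduces to
\[
(\alpha\beta)\gamma=p_\infty\bigl((i_\infty(\alpha)i_\infty(\beta))\,i_\infty(\gamma)\bigr)=\alpha(\beta\gamma),
\]
where one inserts $i_\infty p_\infty$ and discards the homotopy terms by the vanishing; that $p_\infty(1)$ is a unit and that the product is graded commutative are immediate since $p_\infty$ and $i_\infty$ are graded. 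Finally $i_\infty(\alpha\beta)=i_\infty p_\infty(i_\infty(\alpha)i_\infty(\beta))=i_\infty(\alpha)i_\infty(\beta)$, again by the vanishing, so $i_\infty$ is a homomorphism of dg algebras. I expect the main obstacle to be precisely locating the identity $h\,i_\infty=0$: it is what lets one trade the absent generalized Leibniz rule for $h_\infty$ for the available one for $h$, and everything else is bookkeeping.
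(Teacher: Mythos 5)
Your proposal is correct and follows essentially the same route as the paper: reduce to the argument of Proposition~\ref{descent-dga}, with the only new content being the vanishing of $(\partial^X_\infty h_\infty+h_\infty\partial^X_\infty)$ on products $i_\infty(\alpha)\,i_\infty(\beta)$, which the paper likewise derives from the formulas $h_\infty=h+hAh$, $i_\infty=i+hAi$ together with $hi=0$ and $h^2=0$. Your write-up merely makes explicit the key identity $h\,i_\infty=0$ and the resulting chain of vanishings that the paper compresses into ``similar reasoning.''
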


\begin{proof}
The proof is the same as that of Proposition~\ref{descent-dga} with the exception that to prove associativity and that $i_\infty$ is a dg algebra homomorphism, one needs to show that 
\[
(\partial^X_\infty h_\infty+h_\infty \partial^X_\infty) \left ( i_\infty (\alpha)  i_\infty(\beta)\right )
=0.
\] 
Here this follows by similar reasoning due to the facts that one has 
\begin{equation*}
   h_\infty=h+hAh ~{\text{ and }} ~
   i_\infty=i+hAi
   ~{\text{ where  }}~
   A=(1-\delta h)^{-1} \delta
   \end{equation*}
and that $h^2=0$ and $hi=0$. 
\end{proof}

\begin{rmk}
\label{analogous}
We note that the analogous generalizations of Propositions~\ref{descent-dga-Ainfinity} and \ref{descent-Ainfinity} hold as well, with similar proofs modified as the one above. 
\end{rmk}

 \section{Application to a minimal resolution}
 \label{sec-L}

Let $R=k[x_1, \cdots, x_n]$ be a polynomial ring over a field $k$.  
In \cite{BuchEis-75}, Buchsbaum and Eisenbud introduced
the minimal free resolution $\mathbb L_a$ of the quotient $R/(x_1, \cdots, x_n)^a$ of $R$ by powers of the homogeneous maximal ideal. 
In \cite{Srin89}, Srinivasan gives a dg algebra structure on $\mathbb L_a$ using Young tableaux. 
In this section, we use the Perturbation Lemma in a simple way to obtain a dg algebra structure on $\mathbb L_a$ that is $S_n$-invariant. Our approach works in characteristic zero and in positive characteristic provided that the characteristic is large enough.

We begin by recalling the definition of the resolution $\mathbb L_a$ and relating it to (the totalization of) a truncation of a certain double complex in (\ref{chunk-tautological}), (\ref{chunk-L}), and (\ref{chunk-truncation}). In (\ref{truncated-homotopy}) and  (\ref{chunk-PL}) we use the Perturbation Lemma to form a deformation retract between them, as long as one has an appropriately nice associated homotopy so that one can apply Proposition~\ref{descent-dga-perturbation}. Lastly we define such a homotopy using a scaled de Rham differential in (\ref{sigmadef}), proving its properties in Lemmas~\ref{sigmasquared} and ~\ref{scaledLeibniz}, culminating in Theorem~\ref{thm-L}. 

\begin{ipg} 
\label{chunk-tautological}
Here we define the double complex we will be working with. 
This is simply a rearrangement of a Koszul complex as a  double complex of free $R$-modules; see Remark~\ref{rmk-tautological}. 

Let $S=R[y_1,\dots, y_n]$ be a polynomial ring and   
let $\Lambda=R \langle e_1,\dots, e_n \rangle$ be an exterior algebra. Consider the following (anticommutative) double complex whose rows are the strands of the Koszul complex $K(y_1,\dots, y_n;S)$ and whose columns are the tensor product over $R$ of the Koszul complex over $R$ on $x_1, \dots, x_n$ with the graded pieces $S_a$ of $S$. 
We denote both it and its totalization by $\mathbb S$, as it is clear everywhere from the context which we mean. 
All the tensor products in the diagram are over $R$. 

\begin{equation}
\label{diagramS}
\begin{aligned} 
    \xymatrixrowsep{.8pc} \xymatrixcolsep{1pc}
    \xymatrix{    
      & 
      & 
      & 
      &  
      & 
      & \Lambda^{n} \otimes S_{a} \ar[d]_d \ar[r]^{\ \ \ \kappa}
      & \cdots 
\\
      & 
      & 
      & 
      &  
      & \cdots \ar[r]^{\! \! \! \! \!\!\! \! \! \! \!\!\!  \kappa}
      & \Lambda^{n-1} \otimes S_{a} \ar[d] \ar[r]^{\ \ \ \ \kappa}
      & \cdots 
\\
      & 
      & \vdots \ar[d]_d
      & 
      &  
      & 
      & \vdots\ar[d]_d 
      & 
\\
      & \vdots \ar[d]_d
      & \Lambda^{a} \otimes S_{2} \ar[d]_d \ar[r]^{\ \ \  \kappa}
      & \cdots
      &  
      & \cdots \ar[r]^{\! \! \! \! \!\!\! \kappa} 
      & \Lambda^2 \otimes S_{a} \ar[d]_d\ar[r]^{\ \ \ \kappa}  
      & \cdots 
\\
      \vdots \ar[d]_d
      & \Lambda^{a} \otimes S_{1} \ar[d]_d \ar[r]^{\!\!\!\!\kappa}
      & \Lambda^{a-1} \otimes S_{2} \ar[d]_d \ar[r]^{\ \ \ \ \ \kappa}
      & \cdots
      &  
      & \cdots \ar[r]^{\!\!\!\!\!\!\!\kappa}
      & \Lambda^1 \otimes S_{a} \ar[d]_d\ar[r]^{\ \ \ \kappa}
      & \cdots
\\
       \Lambda^{a} \otimes S_{0} \ar[d] \ar[r]^{\!\!\!\!\kappa} 
      & \Lambda^{a-1} \otimes S_{1} \ar[d] \ar[r]^{\kappa} 
      & \Lambda^{a-2} \otimes S_{2} \ar[d] \ar[r]^{\ \ \ \ \ \kappa} 
      & \cdots
      &  
      & \cdots \ar[r]^{\!\!\!\!\!\!\!\!\kappa}
      & \Lambda^0 \otimes S_{a} 
      & 
\\
       \vdots \ar[d]_d 
      & \vdots\ar[d]_d
      & \vdots\ar[d]_d
      &  
      &  
      & 
      &
      & 
\\
        \Lambda^3 \otimes S_{0} \ar[r]^{ \kappa}\ar[d]_d
      & \Lambda^2 \otimes S_{1} \ar[r]^{\kappa}\ar[d]_d
      & \Lambda^1 \otimes S_{2} \ar[d]_d\ar[r]^{\ \ \ \ \kappa}
      & \cdots 
      &  
      & 
      & 
      & 
\\
        \Lambda^2 \otimes S_{0} \ar[r]^{\kappa}\ar[d]_d
      & \Lambda^1 \otimes S_{1} \ar[r]^{\kappa}\ar[d]_d
      & \Lambda^0 \otimes S_{2} 
      &  
      &  
      & 
      & 
      & 
\\
        \Lambda^1 \otimes S_{0} \ar[r]^{\kappa}\ar[d]_d
      & \Lambda^0 \otimes S_{1} 
      & 
      & 
      &  
      & 
      & 
      & 
\\
        \Lambda^0 \otimes S_{0} 
      & 
      &   
      & 
      &  
      & 
      & 
      & 
\\
}
\end{aligned}  
  \end{equation}

More explicitly, the horizontal differentials $\kappa_{i,a}: \Lambda^i\otimes S_a \to \Lambda^{i-1} \otimes S_{a+1}$ are given by
\begin{equation}\label{kappadef}
    \kappa_{i,a}(e_{i_1} \wedge\cdots\wedge e_{i_i} \otimes p) = 
\sum_{j=1}^a (-1)^j e_{i_1} \wedge\cdots\wedge \widehat{e}_{i_j}\wedge\dots\wedge e_{i_i}\otimes y_{i_j}  p
\end{equation}
and the vertical differentials $d_{i,a}\colon \Lambda^i\otimes S_a \to \Lambda^{i-1}\otimes S_a$ are given by 
\begin{equation*}
d_{i,a}=\kos_i \otimes 1
\end{equation*}
where $\kos_i$ denotes the $i$th differential in the Koszul complex $K(x_1, \dots, x_n;R)$. 

For the totalization of this double complex (or of truncations of it), since it is anticommutative,  the differentials are defined as
\[
\partial_i = \sum_a (\kappa_{i,a} + d_{i,a})
\]
without adding any signs. For simplicity we write
\[
\partial=\kappa + d.
\]
We continue to omit the indices on the maps when there is no ambiguity. 
\end{ipg} 

\begin{rmk}
\label{rmk-tautological} 
As an aside, we give a slightly different way of obtaining a double complex which could have been used in this section. It differs only in signs from the one pictured in (\ref{diagramS}), but comes from a well known construction.

Let $V$ be a $k$-vector space with $\dim_k V = n$, and consider the symmetric and exterior algebras
\begin{align*}
\overline{S}&=S(V) 
\cong k[x_1', \dots, x_n'] 
\cong k[x_1'', \dots, x_n''] 
\\[2mm]
\overline{\Lambda}&=\Lambda(V) 
\cong k \langle e_1, \dots, e_n \rangle
\end{align*}
Consider $\overline{S}\cong R$ as a module over its enveloping algebra $\overline{S}^{\,e}=\overline{S}\otimes_k \overline{S}$ via the multiplication map. 
Its  minimal graded free resolution, after identifying the two copies of $\overline{S}$ with polynomial rings as in the display above, is the Koszul complex $\overline{\Lambda} \otimes_k \overline{S} \otimes_k \overline{S} $ on the regular sequence $\{ x_i' \otimes 1 - 1 \otimes x_i''   \}$. 
Rearranging factors, it can be expressed as 
\[
\overline{S} \otimes_k \overline{\Lambda} \otimes_k \overline{S} 
=
\underbrace{\overline{S} \otimes_k \overline{\Lambda}}_{\partial'} \otimes_k \overline{S} 
=
\overline{S} \otimes_k \underbrace{\overline{\Lambda} \otimes_k \overline{S}}_{\partial''}
\]
with the homological degree being the degree of the middle factor and 
\[
\partial = \partial' \otimes 1 - 1 \otimes \partial'' = d-\kappa
\] 
where  
$\partial'$ is the Koszul differential on $x_1', \dots, x_n'$ 
and 
$\partial''$ is the Koszul differential on $x_1'', \dots, x_n''$. 
Viewing graded strands, one can write this as a totalization of an anticommutative double complex of free $R$-modules given by 
\[ 
\overline{S} \otimes_k \overline{\Lambda}^i \otimes_k \overline{S}_j
\cong 
R \otimes_k \overline{\Lambda}^i \otimes_k \overline{S}_j
\cong 
(R \otimes_k \overline{\Lambda}^i) \otimes_R (R \otimes_k \overline{S}_j)
\cong 
\Lambda^i \otimes_R S_j
\]
Although this double complex differs from the one pictured in (\ref{diagramS}) by a sign on the horizontal maps $\kappa$, one could equally well use this complex in the rest of this section; similarly, one could obtain the double complex (\ref{diagramS}) from a Koszul complex by using $-x_i''$ in place of $x_i''$ above; note that it would no longer be a resolution of $R$ over its enveloping algebra.
\end{rmk}


\begin{ipg} 
\label{chunk-L}
We introduce the complexes $\mathbb L_a$ of Buchsbaum and Eisenbud here. 
They show that this is a minimal $R$-free resolution of $R/\fm^a$, where $\fm$ is the homogeneous maximal ideal of $R$.  

It is well known that the rows of the double complex (\ref{diagramS}) except the bottom one are exact; in fact, they can be viewed as the result of applying a base change to the strands of the tautological Koszul complex (see, for example, \cite{MiRa-18}). 
Hence they are contractible as they consist of free $R$-modules. 
So one can define free $R$-modules
\[
L_{i,a}=
\im \kappa_{i+1,a-1}
=
\ker \kappa_{i,a}
\cong 
\coker \kappa_{i+2,a-2},
\]
in other words, with split exact sequences
\begin{align*}
\Lambda^{i+2}\otimes S_{a-2}
\xra{\kappa_{i+2,a-2}}
\Lambda^{i+1}&\otimes S_{a-1}
\xra{\kappa_{i+1,a-1}}
L_{i,a} \lra 0
\\[3mm]
0 \lra L_{i,a}
\xra{\subseteq}
\Lambda^{i} &\otimes S_{a}
\xra{\kappa_{i,a}}
\Lambda^{i-1} \otimes S_{a+1}.
\end{align*}
The vertical differentials $d$ in the diagram induce maps on these modules, which we again denote by $d$, to yield a complex 
\begin{equation*}
    \mathbb L_a \colon  0\to L_{n-1,a}\xra{d_{n-1}}
L_{n-2,a}\xra{d_{n-2}} \dots \to
L_{0,a} \xra{\varepsilon} R \to 0
\end{equation*}
augmented by the evaluation map 
\begin{equation}
\label{epsilon}
\varepsilon\colon L_{0,a}=\Lambda^0\otimes S_a\cong S_a \to R
\end{equation}
induced by the evaluation map from $S=R[y_1, \dots, y_n]$ to $R=k[x_1, \dots, x_n]$ sending $y_i$ to $x_i$.
\end{ipg} 

\begin{ipg} 
\label{chunk-truncation}
Next we define $\tr_{\geq a}(\mathbb S)$ and $ \tr_{\leq a-1}(\mathbb S)$ to be the totalizations of  the truncations  at column $a$ of the anticommutative double complex $\mathbb S$ 
\begin{align*}
    \{ \Lambda^i \otimes S_j \mid j \geq a \}~ \text{and}~
    \{ \Lambda^i \otimes S_j \mid j \leq a-1 \},
\end{align*}
respectively, with differentials inherited from $\mathbb S$.
It is well-known that there is a quasi-isomorphism, and hence a homotopy equivalence, 
\[
\tr_{\leq a-1}(\mathbb S)
\simeq 
\mathbb L_a
\] 
but we will re-derive this via the Perturbation Lemma in order to simultaneously transfer a dg algebra structure from $\tr_{\leq a-1}(\mathbb S)$ over to $\mathbb L_a$ (by obtaining a special deformation retract rather than just any homotopy equivalence).

To set up for this, we first argue as in  \cite{Wut-04} that the left truncation $\tr_{\leq a-1}(\mathbb S)$ itself has a natural dg algebra structure. Indeed, the entire complex $\mathbb S$ is a dg algebra with the obvious  multiplication: 
for $\alpha\in\Lambda^i\otimes S_a$ and $\beta\in\Lambda^j\otimes S_b$, the product is obtained by multiplying the factors in $\Lambda$ and in $S$ independently.
It satisfies the Leibniz rule and other properties of a dg algebra because the differentials $\kappa$ and $d$ do and because homological degree in the totalization of $\mathbb S$ is, in fact, given by the degree in $\Lambda$. 
With this multiplication, the right truncation $\tr_{\geq a}(\mathbb S)$ is clearly a dg ideal and the quotient complex 
\[
\mathbb X_a 
\stackrel{\text{ def }}{=}
\tr_{\leq a-1}(\mathbb S)
\cong 
\mathbb S / \tr_{\geq a}(\mathbb S) 
\]
is therefore a dg algebra. Concretely, the resulting product on the left truncation $\tr_{\leq a-1}(\mathbb S)$ is given by the multiplication on $\mathbb S$ with the proviso that any terms landing in $\Lambda^i\otimes S_j$ with $j \geq a$ are taken to be zero. 
\end{ipg} 

For the next step, we first need a tool for converting a split exact sequence to a deformation retract from any truncation to the image of the differential at the truncation.  

\begin{ipg} 
\label{truncated-homotopy}
Let $(X,\partial^X)$ be a contractible complex of $R$-modules, i.e., one that is homotopy equivalent to zero via a homotopy $s$, (i.e., one that is split exact). 
Denote its truncation at position $c$ by 
\[
\tr_{\geq c}(X) = 
\cdots  
\lra 
X_n 
\xra{\partial_{n+1}^X}
\cdots 
\lra X_{c+1} 
\xra{\partial_{c+1}^X} X_c 
\lra 0
\]
Let $\im \partial^X_c$ denote the stalk complex with this module in degree $c$ and 0 modules elsewhere. The chain maps $i$ and $p$ given in degree $c$ by $s_{c-1}$ and $\partial_c^X$, respectively, yield a deformation retract
\begin{align*}
    \tr_{\geq c}(X)
\stackrel[p]{i}{\longleftrightarrows}
\im \partial^X_c
\end{align*}
with associated homotopy $h=-s|_{\tr_{\geq c}X}$.   Indeed one can easily check that $pi=1$ and $ip\simeq 1$ via the homotopy $h$. 
Note that one could also use $\coker \partial^X_{c+1}$ instead of $\im \partial^X_c$ with appropriate $i$ and $p$. 
If, furthermore, the original contracting homotopy satisfies $s^2= 0$, then the deformation retract is special:  $h^2=0$ and $hi=0$ and one always has  $ph=0$  due to the fact that $p=0$ in degrees $n\neq c$.
\end{ipg} 

Next we want to transfer this structure from $\mathbb X_a = \tr_{\leq a-1}(\mathbb S)$, which has a dg algebra structure by (\ref{chunk-truncation}) to the minimal free resolution $\mathbb L_a$ of $R/\fm^a$. 
By Proposition~\ref{descent-dga-perturbation}, it suffices to find a special deformation retract of the form 
\[
(\mathbb X_a,\partial^{\mathbb X_a}) 
\stackrel[]{}{\longleftrightarrows}
(\mathbb L_a,d)
\]
that is a perturbation of a special deformation retract whose homotopy satisfies generalized Leibniz. 
Note that the differential of $\mathbb X_a$ is exactly $\kappa+d$.
In \ref{chunk-PL}, we  discuss how one can find this deformation retract,
given a contracting homotopy on the higher rows of $\mathbb S$, which we define in \ref{sigmadef} and whose required properties we establish in  Lemmas~\ref{sigmasquared} and \ref{scaledLeibniz}. 

\begin{ipg} 
\label{chunk-PL}
Here is overview of how we obtain such a deformation retract using the Perturbation Lemma; see (\ref{perturbation}). 
First we form a deformation retract between two complexes $\mathbb X_a^\circ$ and $\mathbb L_a^\circ$, where $\mathbb X_a^\circ$ is obtained from $\mathbb X_a$ by replacing the vertical differentials $d$ by 0 and $\mathbb L_a^\circ$ is the complex $\mathbb L_a$ with differentials set equal to zero. 
We do this via (\ref{truncated-homotopy}) using the homotopy from \ref{sigmadef}. 
Second we use the Perturbation Lemma to reinsert the original differentials on each, which has the effect of modifying the maps $i$ and $p$.

We start by finding a deformation retract of the form 
\[
(\mathbb X_a^\circ,\kappa) 
\stackrel[p]{i}{\longleftrightarrows}
(\mathbb L_a^\circ,0)
\]
with a homotopy $h$. 
For rows of $\mathbb X_a^\circ$ except the bottom one, 
we use (\ref{truncated-homotopy}) as follows.  Recall that the rows of $\mathbb S$ are split exact with a contracting homotopy that we call $\sigma$ (an explicit one is given in Remark~\ref{sigmadef}). 
So each row that gets truncated has a deformation retract onto the image $L_{i,a}$ of the next horizontal differential; see diagram (\ref{diagramX0L0}). 
Note that some of the lower rows will remain intact and hence are homotopy equivalent to zero; see (\ref{diagramX0L0}). 
On the other hand, the row $\Lambda^0\otimes S_0$ at the bottom of the diagram  is not exact and so needs to be dealt with separately in conjunction with $R=(\mathbb L_a)_0$. 
For this we use that there is an isomorphism $\varepsilon\colon \Lambda^0\otimes S_0 \to R$ defined in (\ref{epsilon}). 

Putting this all together, one obtains chain maps given by
\begin{align*}
i &= 
\begin{cases}
\sigma & 
{\text{on }} L_{i,a} 
\\
\varepsilon^{-1} &
{\text{on }} R
\end{cases}\\
p &= 
\begin{cases}
\kappa & 
{\textrm{on }} 
\Lambda^i \otimes S_{a-1}
{\textrm{ for }} 
i>0 
\\
\varepsilon & 
{\textrm{on }} 
\Lambda^0 \otimes S_0 
\\
0 & 
{\textrm{else,}} 
\end{cases}
\end{align*}
with the property that $pi=1$ and $ip\simeq 1$ via the homotopy $h=-\sigma\vert_{\mathbb X_a^\circ}$. 
The maps $i$ and $p$ are pictured in following diagram.
\begin{equation}
\label{diagramX0L0}
\begin{aligned} 
    \xymatrixrowsep{1.3pc} \xymatrixcolsep{0.6pc}
    \xymatrix{    
      & 
      & 
      & 
      & 
      &
      & 0
\\
      & 
      & 
      & 
      & \Lambda^{n} \otimes S_{a-1} 
      \ar@/_{.9pc}/[rr]^{\ \ p}
      &
      & \ar@/_{.9pc}/[ll]_{\ \ i}L_{n-1,a}  
\\
      & 
      & 
      & \ \vspace{10mm} \cdots \ar[r]^{}
      & \Lambda^{n-1} \otimes S_{a-1} \ar@/_{.9pc}/[rr]^{\ \ p}
      &
      & \ar@/_{1pc}/[ll]_{\ \ i}L_{n-2,a} 
\\
      & \vdots 
      & 
      &  
      & \vdots 
      &
      & \vdots
\\
      \vdots 
      & \Lambda^{a} \otimes S_{1}  \ar[r]^{}
      & \cdots 
      & \cdots \ar[r]^{}
      & \Lambda^2 \otimes S_{a-1} \ar@/_{.9pc}/[rr]^{\ \ p}
      &
      &\ar@/_{1pc}/[ll]_{\ \ i} L_{1,a}  
\\
       \Lambda^{a} \otimes S_{0}  \ar[r]^{} 
      & \Lambda^{a-1} \otimes S_{1}  \ar[r]^{}
      & \cdots 
      & \cdots \ar[r]^{}
      & \Lambda^1 \otimes S_{a-1} \ar@/_{.9pc}/[rr]^{\ \ p}
      &
      & \ar@/_{1pc}/[ll]_{\ \ i}L_{0,a}  
\\
       \Lambda^{a-1} \otimes S_{0} 
      & \vdots
      &  
      & \cdots \ar[r]^{}
      & \Lambda^0 \otimes S_{a-1} 
      &
      & R\ar@/^{3pc}/[lllllldddd]_{\ \ i}
\\
        \vdots 
      & \Lambda^2 \otimes S_{1} \ar[r]^{}
      & \cdots 
      & 
      & 
      &
      & 
\\
        \Lambda^2 \otimes S_{0} \ar[r]^{}
      & \Lambda^1 \otimes S_{1} \ar[r]^{}
      & \cdots 
      & 
      & 
      &
      & 
\\
        \Lambda^1 \otimes S_{0} \ar[r]^{}
      & \Lambda^0 \otimes S_{1}
      & 
      & 
      & 
      &
      & 
\\
        \Lambda^0 \otimes S_{0} \ar@/_3pc/[uuuurrrrrr]_{p}
      & 
      & 
      & 
      & 
      &
      & 
\\
}
\end{aligned}  
  \end{equation}

Next we apply the Pertubation Lemma, adding the missing vertical differentials $d$ of $\mathbb X_a$ and $d$ of $\mathbb L_a$. 
More precisely, consider the perturbation $\delta=d$ on $\mathbb X^\circ_a$; this is a small perturbation since the double complex $\mathbb X_a$ is bounded. 
First, we check that that the differentials on $\mathbb L_a$ obtained in this way are the original differentials on $\mathbb L_a$. 
This is because one has 
\[
\partial^{\mathbb L_a^\circ}+p_\infty\delta i
=
0+p(1+(dh)+(dh)^2+\cdots)di
=pdi=dpi=d
\]
where the second equality follows from the fact that $p$ vanishes on most of the diagram, the third one follows from the fact that $p$ is defined using $\kappa$ and $\varepsilon$, as well as the commutativity of diagram (\ref{diagramS}) and the  properties of $\varepsilon$, and the last one is because $pi=1$. 

In summary, one gets a homotopy equivalence
\begin{equation}\label{xlhe}
(\mathbb X_a,\partial^{\mathbb X_a}\!\!=\kappa+d) 
\stackrel[p_\infty]{i_\infty}{\longleftrightarrows}
(\mathbb L_a,\partial^{\mathbb L_a}=d)
\end{equation}

For later use, we calculate the new chain maps $i_\infty$ and $p_\infty$, as well as the associated homotopy $h_\infty$ using the formulas in Definition~\ref{def-infinity}. 
The map $i_\infty$ is given by 
\[
i_\infty=(1+(h \delta)+(h \delta)^2+\cdots)i
\]
where $\delta=d$, and this can be written as
\begin{equation}
\label{i-infty}
i_\infty = 
\begin{cases}
(1+(-\sigma d)+(-\sigma d)^2+\cdots) \sigma & 
{\text{ on }} L_{i,a} \\
\varepsilon^{-1} & 
{\text{ on }} R. 
\end{cases}
\end{equation}
In contrast, the map $p_\infty$ is remarkably simpler since $p$ equals zero on most of its domain. Indeed it is given by   
\[
p_\infty=p(1+(\delta h)+(\delta h)^2+\cdots)
\]
which can be written as
\begin{equation}
\label{p-infty}
p_\infty = 
\begin{cases}
\kappa & 
{\textrm{on }} 
\Lambda^i \otimes S_{a-1}
{\textrm{ for }} 
i>0
\\
\varepsilon & 
{\textrm{on }} 
\Lambda^0 \otimes S_j 
{\textrm{ for all }} j
\\
0 & 
{\textrm{else.}} 
\end{cases}
\end{equation}
We record also the resulting homotopy for $i_\infty p_\infty \simeq 1$, which is 

\begin{align}
\label{h-infty}
\begin{aligned}
h_\infty&=h(1+(\delta h)+(\delta h)^2+\cdots)
\\
&=-\sigma(1+(-d \sigma)+(-d \sigma)^2+\cdots) 
\end{aligned}
\end{align}

The map $p_\infty$ has the form pictured in the following diagram. 

\begin{equation}
\label{diagramXL}
\begin{aligned} 
    \xymatrixrowsep{0.6pc} \xymatrixcolsep{0.6pc}
    \xymatrix{    
      & 
      & 
      & 
      & 
      & 0 \ar[d] 
      &  
\\
      & 
      & 
      & 
      & \Lambda^{n} \otimes S_{a-1} \ar[d] \ar[r]^{\ \ p_\infty}
      & L_{n-1,a} \ar[d] \ar[r]^{}
      & 0 
\\
      & 
      & 
      & \ \vspace{10mm} \cdots \ar[r]^{}
      & \Lambda^{n-1} \otimes S_{a-1} \ar[d] \ar[r]^{\ \ p_\infty}
      & L_{n-2,a} \ar[d] \ar[r]^{}
      & 0
\\
      & \vdots \ar[d]
      & 
      &  
      & \vdots\ar[d] 
      & \vdots \ar[d]  
      & 
\\
      \vdots \ar[d]
      & \Lambda^{a} \otimes S_{1} \ar[d] \ar[r]^{}
      & \cdots
      & \cdots \ar[r]^{}
      & \Lambda^2 \otimes S_{a-1} \ar[r]^{\ \ p_\infty}\ar[d] 
      & L_{1,a} \ar[d]\ar[r]^{}
      & 0
\\
       \Lambda^{a} \otimes S_{0} \ar[d] \ar[r]^{} 
      & \Lambda^{a-1} \otimes S_{1} \ar[d] \ar[r]^{}
      & \cdots 
      & \cdots \ar[r]^{}
      & \Lambda^1 \otimes S_{a-1} \ar[r]^{\ \ p_\infty}\ar[d] 
      & L_{0,a} \ar[d]\ar[r]^{}
      & 0
\\
       \Lambda^{a-1} \otimes S_{0} \ar[d] 
      & \vdots\ar[d]
      &  
      & \cdots \ar[r]^{}
      & \Lambda^0 \otimes S_{a-1} \ar[r]^{\ \ p_\infty}
      & R
      & 
\\
        \vdots\ar[d]
      & \Lambda^2 \otimes S_{1} \ar[r]^{}\ar[d]
      & \cdots 
      & 
      & 
      & 
      & 
\\
        \Lambda^2 \otimes S_{0} \ar[r]^{}\ar[d]
      & \Lambda^1 \otimes S_{1} \ar[r]^{}\ar[d]
      & \cdots 
      & 
      & 
      & 
      & 
\\
        \Lambda^1 \otimes S_{0} \ar[r]^{}\ar[d]
      & \Lambda^0 \otimes S_{1} \ar@/_2pc/[uuurrrr]^{p_\infty}
      & 
      & 
      & 
      & 
      & 
\\
        \Lambda^0 \otimes S_{0} \ar@/_4pc/[uuuurrrrr]_{p_\infty}
      & 
      & 
      & 
      & 
      & 
      & 
\\
}
\end{aligned}  
  \end{equation}
\end{ipg}

We now define an explicit contracting homotopy $\sigma$ on the rows of $\mathbb S$ that can be used to complete the argument in \ref{chunk-PL}.  
This turns out to be nothing but a scaled version of the de Rham differential. 

\begin{ipg} \label{sigmadef}
In view of Proposition~\ref{descent-dga-perturbation}, in order to transfer the dg algebra structure from $\mathbb X_a$ to $\mathbb L_a$ we need a special deformation retract, that is, we need a homotopy $h$  with the properties listed in (\ref{specialDR}). 
As explained in \ref{chunk-PL} in view of (\ref{truncated-homotopy}), this comes down to finding a contracting homotopy $\sigma$ with $\sigma^2=0$ on the rows 
\begin{equation*}
\cdots  \to \Lambda^{i-1}\otimes S_{m+1} \to \Lambda^i\otimes S_m \to \Lambda^{i+1}\otimes S_{m-1} \to \cdots
\end{equation*}
of the entire diagram $\mathbb S$ displayed in (\ref{diagramS})  with the property that $i+m>0$. 

Assume now that the field $k$ has characteristic zero (for the positive characteristic case, see the end of this portion). 

Define $\sigma_{i,m} \colon \Lambda^i\otimes S_m \to \Lambda^{i+1}\otimes S_{m-1}$ 
as
\begin{align*}
    \sigma_{i,m}&(e_{t_1}\wedge \cdots \wedge e_{t_i} \otimes y_{p_1}\cdots y_{p_m})
    &= \frac{1}{i+m} \sum_{j=1}^m e_{p_j}\wedge e_{t_1}\wedge \cdots \wedge e_{t_i} \otimes y_{p_1}\cdots \hat y_{p_j}\cdots y_{p_m}
\end{align*}
where it is understood that $\sigma_{i,m}=0$ when the target of the map is the zero module, that is, when $m=0$ or $i=n$. 
This can also be written as a scaled de Rham differential
\[
\sigma_{i,m}
=
\frac{1}{i+m} \ \sum_{j=1}^n e_{j} \otimes \frac{\partial}{\partial y_{j}}.
\]

To address the case of positive characteristic $p$, note that in general we only need to  define a contracting homotopy $\sigma_{i,m}$ for $m\leq a$ when we apply (\ref{truncated-homotopy}) to truncate the complex at position $a-1$, and so it suffices to assume $p\geq n+a$. This ensures that when necessary one has  $\frac{1}{i+m}\in k$. 

In Lemma~\ref{sigmasquared}, we show $\sigma$ is a contracting homotopy with $\sigma^2=0$. 
In view of Lemma~\ref{descent-dga}, we also require the homotopy to satisfy the generalized Leibniz property; this also comes down to the same property for $\sigma$ on the rows of the entire diagram $\mathbb S$ in (\ref{diagramS}), which we verify in Lemma~\ref{scaledLeibniz}. 

A contracting homotopy was defined previously by Srinivasan in \cite{Srin89}. However, it does not satisfy either required property. 
The map $\sigma$ defined above is more symmetric (it is invariant under permutations of the variables) and hence ends up having its square equal to zero and satisfying the generalized Leibniz rule, in fact, the stronger scaled Leibniz rule, as we see in the next two results. 
\end{ipg} 

 \begin{lem}
 \label{sigmasquared}
 Consider the maps $\sigma$ defined in (\ref{sigmadef}) on the rows of diagram (\ref{diagramS}) in which the indices sum to a positive number. 

 If $R$ has characteristic zero, the maps $\sigma$ give a contracting homotopy (that is, a null homotopy for the identity map) on the rows and satisfy $\sigma^2=0$. 
 
 If $R$ has positive characteristic $p$, the same  conclusions hold for $\sigma_{i,m}$ with $m\leq a-1$ as long as $p\geq n+a$.
 \end{lem}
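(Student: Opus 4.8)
The plan is to verify two things about the scaled de Rham maps $\sigma_{i,m}=\frac{1}{i+m}\sum_{j=1}^n e_j\otimes \frac{\partial}{\partial y_j}$: first, that they form a contracting homotopy for the rows of $\mathbb S$ (i.e.\ $\kappa\sigma+\sigma\kappa=1$ on the relevant summands), and second, that $\sigma^2=0$. Both are direct computations on a monomial basis element $\omega=e_{t_1}\wedge\cdots\wedge e_{t_i}\otimes y_{p_1}\cdots y_{p_m}$, so I would phrase everything in terms of the unscaled operators. Write $\kappa$ as in (\ref{kappadef}), which contracts an exterior factor $e_{t_\ell}$ against a newly introduced variable $y_{t_\ell}$, and write the unscaled de Rham map as $D=\sum_j e_j\otimes\partial/\partial y_j$, so that $\sigma_{i,m}=\frac{1}{i+m}D$ on $\Lambda^i\otimes S_m$. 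The key point is that $D$ raises exterior degree and lowers polynomial degree, while $\kappa$ does the opposite; the scaling factor $\frac{1}{i+m}$ is precisely what is needed to make $\kappa\sigma+\sigma\kappa$ collapse to the identity.

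\emph{Contracting homotopy.} The computation I expect is the standard one for the Koszul–de Rham pairing. Compute $\kappa D(\omega)$ and $D\kappa(\omega)$ separately on a monomial. Each is a double sum over an exterior slot and a polynomial slot; the ``cross terms'', where the wedged variable and the contracted variable are different indices, cancel between the two expressions after accounting for the sign $(-1)^j$ in $\kappa$ and the Koszul sign incurred by inserting $e_j$ at the front in $D$. The surviving ``diagonal terms'', where one inserts $e_j$ and then immediately removes it (or vice versa), contribute a count: removing-then-reinserting over the $m$ polynomial factors gives a factor of $m$, and inserting-then-removing over the $i$ exterior factors gives a factor of $i$. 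Thus $(D\kappa+\kappa D)(\omega)=(i+m)\,\omega$, and dividing by $i+m$ gives $(\sigma\kappa+\kappa\sigma)(\omega)=\omega$ exactly when $i+m>0$. In positive characteristic $p$ this identity still makes sense provided $\frac{1}{i+m}\in k$, which holds for the relevant range $m\le a-1$ (so $i+m\le n+a-1<p$) when $p\ge n+a$.

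\emph{The vanishing $\sigma^2=0$.} Here I would compute $D^2$ on $\omega$ and show it is zero, since $\sigma^2$ is just $\frac{1}{(i+m)(i+m-1)}D^2$ up to the scalar (the two applications of $D$ have consecutive scaling denominators, which are nonzero in the admissible range). Applying $D$ twice inserts two exterior generators $e_j\wedge e_{j'}$ coming from differentiating two polynomial variables; the expression is symmetric under swapping the two differentiations (mixed partials commute, $\partial^2/\partial y_j\partial y_{j'}=\partial^2/\partial y_{j'}\partial y_j$) but antisymmetric under the wedge $e_j\wedge e_{j'}=-e_{j'}\wedge e_j$. Summing over the ordered pairs $(j,j')$ therefore pairs each term with its transpose of opposite sign, so everything cancels and $D^2=0$. (The diagonal $j=j'$ terms vanish automatically because $e_j\wedge e_j=0$.) Hence $\sigma^2=0$ on the nose.

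The main obstacle I anticipate is purely bookkeeping: getting all the signs right simultaneously in the homotopy identity. One must track the $(-1)^j$ in (\ref{kappadef}), the Koszul sign rule from Definition~\ref{def-Ainfinity} incurred when $D$ prepends $e_j$ to a wedge of length $i$, and the reindexing that occurs when the inserted and deleted indices interleave. The cancellation of cross terms and the exact count $i+m$ of diagonal terms are where an error would hide, so I would organize the calculation by splitting each double sum into the diagonal part and the off-diagonal part and matching them termwise rather than trying to manipulate the whole sum at once. By contrast, $\sigma^2=0$ is an almost formal antisymmetry argument and should cause no trouble, and the positive-characteristic statement reduces to the single remark that all the scalars $\frac{1}{i+m}$ (and $\frac{1}{i+m-1}$) invoked lie in $k$ under the hypothesis $p\ge n+a$.
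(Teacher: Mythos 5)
Your proposal is correct and follows essentially the same route as the paper's proof: a direct computation on monomial basis elements in which the cross terms of $\kappa\sigma$ and $\sigma\kappa$ cancel in signed pairs, the diagonal terms count $m$ and $i$ respectively so that $(\kappa\sigma+\sigma\kappa)(\alpha)=\frac{m}{i+m}\alpha+\frac{i}{i+m}\alpha=\alpha$, and $\sigma^2=0$ follows from the symmetry of the double sum (commuting partials) against the antisymmetry of the wedge. The one slip is immaterial bookkeeping: since $\sigma$ preserves the total degree $i+m$ along a row (it raises exterior degree and lowers polynomial degree by one each), the prefactor in $\sigma^2$ is $\frac{1}{(i+m)^2}$ rather than $\frac{1}{(i+m)(i+m-1)}$, so only the invertibility of $i+m$ in $k$ is ever needed, exactly as guaranteed by the hypothesis $p\geq n+a$ in the range $m\leq a-1$.
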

 
 \begin{proof}
First we show that $\kappa \sigma + \sigma \kappa = 1$. 
At the ends of the rows one can show this easily, 
so we may work with basis elements of $\Lambda^i\otimes S_m$ with $m,i>0$. 
We compute $\kappa \sigma$ and $\sigma \kappa$ separately.
The reader should note that we do not replace any repeated factors $e_j\wedge e_j$ with $0$, noting that the formula for the Koszul differential $\kappa$ gives the same output for either form of input. 

For any $\alpha=e_{t_1}\wedge \cdots \wedge e_{t_i} \otimes y_{p_1}\cdots y_{p_m} \in \Lambda^i\otimes S_m$, 
one has 
 \begin{align*}
   &\kappa_{i+1,m-1} \sigma_{i,m}(\alpha)\\ &=\frac{1}{i+m} \sum_{j=1}^m  [\alpha +\sum_{u=1}^i(-1)^u e_{p_j}\wedge e_{t_1}\wedge \cdots \hat e_{t_u} \cdots \wedge e_{t_i} \otimes y_{t_u}y_{p_1}\cdots   \hat y_{p_j}\cdots y_{p_m}]\\
   &=\frac{1}{i+m}[m \alpha +  \sum_{j=1}^m  \sum_{u=1}^i(-1)^u e_{p_j}\wedge e_{t_1}\wedge \cdots \hat e_{t_u} \cdots \wedge e_{t_i} \otimes y_{t_u}y_{p_1}\cdots   \hat y_{p_j}\cdots y_{p_m}]
\end{align*}
 and 
  \begin{align*}
   &\sigma_{i-1,m+1}\kappa_{i,m}(\alpha)\\ 
   &= \sum_{u=1}^i(-1)^{u+1} \frac{1}{i+m}[\alpha+ \sum_{j=1}^m e_{p_j}\wedge e_{t_1}\wedge \cdots \hat e_{t_u} \cdots \wedge e_{t_i} \otimes y_{t_u}y_{p_1}\cdots   \hat y_{p_j}\cdots y_{p_m}]\\
   &=\frac{1}{i+m}[i \alpha + \! \sum_{j=1}^m  \sum_{u=1}^i(-1)^{u+1} e_{p_j}\wedge e_{t_1}\wedge \cdots \hat e_{t_u} \cdots \wedge e_{t_i} \otimes y_{t_u}y_{p_1}\!\cdots   \hat y_{p_j}\cdots y_{p_m}]
\end{align*}
Thus for $m,i>0$ one has 
\begin{align*}
    (\kappa_{i+1,m-1} \sigma_{i,m}+\sigma_{i-1,m+1} \kappa_{i,m})(\alpha)=(\frac{m}{i+m}) \alpha+ (\frac{i}{i+m}) \alpha = \alpha
\end{align*}

Next, to see that $\sigma^2=0$, one computes for $m \geq 2 $ 
\begin{align*}
   &\sigma_{i+1,m-1} \sigma_{i,m}(\alpha) =\frac{1}{(i+m)^2} \sum_{j=1}^m  \sum_{
   {u=1}\atop{u\neq j}}^m e_{p_u}\wedge e_{p_j}\wedge e_{t_1}\wedge \cdots \wedge e_{t_i} \otimes y_{p_1}\cdots \hat y_{p_u}\cdots  \hat y_{p_j}\cdots y_{p_m}
\end{align*}
which is zero since $e_{p_u}\wedge e_{p_j} = - e_{p_j}\wedge e_{p_u}$.

Note that this proof works in positive characteristic as long as the maps $\sigma_{i,m}$ are defined, which is guaranteed by the hypotheses. 
 \end{proof}
 
 \begin{lem}
 \label{scaledLeibniz}
 If $R$ has characteristic zero, the maps $\sigma$ defined in (\ref{sigmadef}) satisfy the scaled Leibniz rule. 
 More precisely, when $\alpha \in \Lambda^i \otimes S_a$ , $\beta \in \Lambda^j \otimes S_b$ with $i+a+j+b$ positive,  
 the maps $\sigma$ satisfy
 \begin{equation*}
     \sigma(\alpha \beta) = \frac{1}{i+a+j+b}\left ((i+a)\sigma(\alpha)\beta+(-1)^i(j+b)\alpha \sigma(\beta)\right )
 \end{equation*}
 and when $i=a=j=b=0$ one has that $\sigma(\alpha \beta)$, $\sigma(\alpha)$, and $\sigma(\beta)$ are  all 0. 
 
 If $R$ has positive characteristic $p$, the same conclusion holds as long as $p\geq i+a+j+b$.
 \end{lem}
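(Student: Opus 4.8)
The plan is to strip away the scaling factor and recognize $\sigma$ as a scalar multiple of the de Rham-type derivation $D=\sum_{j=1}^n e_j\wedge(-)\otimes\frac{\partial}{\partial y_j}$, where ``$e_j\wedge$'' means wedging $e_j$ onto the left of the exterior factor. By the explicit formula in (\ref{sigmadef}), on $\Lambda^i\otimes S_m$ one has $\sigma_{i,m}=\frac{1}{i+m}D$; equivalently $D(\alpha)=(i+a)\sigma(\alpha)$ for $\alpha\in\Lambda^i\otimes S_a$. The first step is therefore to prove the \emph{unscaled} signed Leibniz rule
\[
D(\alpha\beta)=D(\alpha)\beta+(-1)^i\alpha D(\beta)
\]
for $\alpha\in\Lambda^i\otimes S_a$ and $\beta\in\Lambda^j\otimes S_b$, since the scaled rule will then follow by purely numerical bookkeeping.

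For this first step I would reduce to decomposable tensors $\alpha=\omega\otimes f$ and $\beta=\eta\otimes g$ with $\omega\in\Lambda^i$, $\eta\in\Lambda^j$, $f\in S_a$, $g\in S_b$, since every map in sight is $R$-linear. Recalling from (\ref{chunk-truncation}) that the product on $\mathbb S$ multiplies the two tensor factors independently and that $S$ sits in homological degree $0$ (so no Koszul sign arises from commuting $f$ past $\eta$), one has $\alpha\beta=(\omega\wedge\eta)\otimes(fg)$. Applying $D$ and using the ordinary Leibniz rule for each $\partial/\partial y_j$ on the product $fg$ splits $D(\alpha\beta)$ into two sums; the first reassembles directly as $D(\alpha)\beta$, while in the second the wedge factor $e_j$ produced by differentiating $g$ must be moved past $\omega$, via $\omega\wedge e_j=(-1)^i e_j\wedge\omega$, contributing the sign $(-1)^i$ and yielding $(-1)^i\alpha D(\beta)$. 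This graded-commutativity sign is the only delicate point in the whole argument.

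With the unscaled rule in hand, the second step is the numerical combination. Total degree is additive under the product: $\alpha\beta\in\Lambda^{i+j}\otimes S_{a+b}$ has total degree $(i+a)+(j+b)$, so $\sigma(\alpha\beta)=\frac{1}{(i+a)+(j+b)}D(\alpha\beta)$. Substituting the unscaled Leibniz rule and then rewriting $D(\alpha)=(i+a)\sigma(\alpha)$ and $D(\beta)=(j+b)\sigma(\beta)$ gives exactly
\[
\sigma(\alpha\beta)=\frac{1}{i+a+j+b}\bigl((i+a)\sigma(\alpha)\beta+(-1)^i(j+b)\alpha\sigma(\beta)\bigr),
\]
as claimed. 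The degenerate case $i=a=j=b=0$ is immediate: then $\alpha,\beta$ and $\alpha\beta$ all lie in $\Lambda^0\otimes S_0$, on which $\sigma$ (equivalently $D$) vanishes because it differentiates a polynomial of degree $0$, so all three terms are zero.

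Finally, for positive characteristic the computation is unchanged provided the scalars $\frac{1}{i+a}$, $\frac{1}{j+b}$ and $\frac{1}{i+a+j+b}$ all lie in $k$, which is guaranteed by the characteristic hypothesis $p\geq i+a+j+b$ keeping each denominator invertible. I do not expect any genuine obstacle here: once the signed Leibniz rule for $D$ is established, everything else is bookkeeping, and the only step requiring care is the graded sign $(-1)^i$ described above.
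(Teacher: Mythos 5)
Your proof is correct, and it reaches the identity by a mildly different route than the paper. You first isolate the unscaled operator $D=\sum_{j} e_j\wedge(-)\otimes \partial/\partial y_j$ and prove the ordinary signed Leibniz rule $D(\alpha\beta)=D(\alpha)\beta+(-1)^i\alpha D(\beta)$ on decomposable tensors (using $R$-bilinearity, the Leibniz rule for each $\partial/\partial y_j$, and the commutation sign $\omega\wedge e_j=(-1)^i e_j\wedge\omega$), and only afterwards reinstate the scalars via $\sigma_{i,m}=\frac{1}{i+m}D$ and additivity of total degree. The paper instead verifies the scaled identity in one pass: it expands $\sigma(\alpha\beta)$ on explicit monomials $e_{t_1}\wedge\cdots\wedge e_{t_i}\otimes y_{p_1}\cdots y_{p_a}$ and $e_{s_1}\wedge\cdots\wedge e_{s_j}\otimes y_{q_1}\cdots y_{q_b}$, splits the resulting sum according to whether the differentiated variable comes from $\alpha$ or from $\beta$, and recognizes the two pieces as $\frac{i+a}{i+a+j+b}\sigma(\alpha)\beta$ and $\frac{(-1)^i(j+b)}{i+a+j+b}\alpha\sigma(\beta)$. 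The core content is the same in both arguments (Leibniz on the symmetric factor plus the sign $(-1)^i$ from moving the new exterior generator past the $\Lambda$-part of $\alpha$), but your organization buys something: the sign analysis is done once for an honest derivation $D$ defined without any scalars, hence valid in every characteristic; the scaled rule then becomes pure degree bookkeeping; and the characteristic-$p$ hypothesis is visibly exactly the invertibility of $i+a$, $j+b$, and $i+a+j+b$. What the paper's version buys is uniformity with the neighboring proof of Lemma~\ref{sigmasquared} (the identities $\kappa\sigma+\sigma\kappa=1$ and $\sigma^2=0$), which is carried out by the same kind of monomial expansion, so no auxiliary operator needs to be introduced.
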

 
 \begin{proof}
 Without loss of generality, we may assume that 
 \begin{equation*}
     \alpha=e_{t_1}\wedge \cdots \wedge e_{t_i} \otimes y_{p_1}\cdots y_{p_a} ~\text{and} ~
 \beta=e_{s_1}\wedge \cdots \wedge e_{s_j} \otimes y_{q_1}\cdots y_{q_b}. 
 \end{equation*}
 with $i+a+j+b>0$. 
 
 Then one has 
 \begin{align*}
     \sigma(\alpha \beta) &=
     \sigma( e_{t_1}\wedge \cdots \wedge e_{t_i}\wedge e_{s_1}\wedge \cdots \wedge e_{s_j} \otimes y_{p_1}\cdots y_{p_a} y_{q_1}\cdots y_{q_b})
     \\
    &=\frac{1}{i+a+j+b}  \left ( \sum_{u=1}^a e_{p_u}\wedge e_{t_1}\wedge \cdots \wedge e_{s_j} \otimes y_{p_1}\cdots \hat y_{p_j}\cdots y_{p_a}y_{q_1}\cdots y_{q_b}  \right.
    \\ 
     &\hspace{25mm} + \left.
     \sum_{v=1}^b e_{q_v}\wedge e_{t_1}\wedge \cdots \wedge e_{s_j} \otimes y_{p_1}\cdots y_{p_a}y_{q_1}\cdots \hat y_{q_v}\cdots y_{q_b}
    \right )
    \\
    &=\frac{1}{i+a+j+b}  \left( \sum_{u=1}^a (e_{p_u}\wedge e_{t_1}\wedge \cdots \wedge e_{t_i} \otimes y_{p_1}\cdots \hat y_{p_j}\cdots y_{p_a})(\beta)  \right.
    \\ 
    &\hspace{25mm} 
    + \left. 
    \sum_{v=1}^b (-1)^i (\alpha)  (e_{q_v}\wedge e_{s_1}\wedge \cdots \wedge e_{s_j} \otimes y_{q_1}\cdots \hat y_{q_v}\cdots y_{q_b}   )\right)
    \\
    & =\frac{1}{i+a+j+b}\left ((i+a)\sigma(\alpha)\beta+(-1)^i(j+b)\alpha \sigma(\beta)\right )
 \end{align*}
where the relevant terms are 0 when either $i+a$ or $j+b$ is 0.  
\end{proof}
 
Next we put together all the ingredients from this section to obtain our main application of our homotopy descent results. 
The proof is an application of Proposition~\ref{descent-dga-perturbation} to the deformation retract obtained in (\ref{chunk-PL}) with the homotopy $h_\infty$ defined in (\ref{h-infty}) obtained from the homotopy $\sigma$ on the rows of the diagram (\ref{diagramS}) satisfying the scaled Leibniz rule and hence the generalized Leibniz rule; see  (\ref{sigmadef}), (\ref{sigmasquared}), and (\ref{scaledLeibniz}). 
See also the overview in the paragraph before (\ref{chunk-PL}). 
One note: one need only check the homotopy $h$ on $\mathbb X_a$ satisfies the scaled Leibniz rule for products that land in $\Lambda^i\otimes S_m$ for $i\leq n$ and $m<a$ since otherwise the product is zero and the result is trivial; this explains why we need only take $p\geq a+n$ in the statement below. 

\begin{thm}
\label{thm-L}
Let $a$ be a positive integer. Suppose that $k$ is a field of characteristic zero or positive characteristic $p\geq a+n$. 
Consider the deformation retract obtained in (\ref{chunk-PL})
\[
\mathbb X_a 
\stackrel[p_\infty]{i_\infty}{\longleftrightarrows}
\mathbb L_a
\]
with the associated homotopy $h_\infty$ defined in (\ref{h-infty}) using $\sigma$ from (\ref{sigmadef}), where $i_\infty$ and $p_\infty$ are defined as in (\ref{i-infty}) and (\ref{p-infty}). 

Defining the product of  $\alpha,\beta\in \mathbb L_a$ by \begin{equation*}
     \alpha \beta 
= p_\infty 
\left( 
i_\infty(\alpha)  i_\infty(\beta) 
\right) 
  \end{equation*} 
yields a dg algebra structure on $\mathbb L_a$. Furthermore, with this structure the map $i_\infty$ is a homomorphism of dg algebras. 
 \end{thm}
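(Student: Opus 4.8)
The plan is to recognize this theorem as a direct application of Proposition~\ref{descent-dga-perturbation} to the special deformation retract and perturbation already assembled in (\ref{chunk-PL}). Concretely, I would feed the proposition the dg algebra $\mathbb{X}_a^\circ$ equipped with the horizontal differential $\kappa$, the unperturbed special deformation retract $(\mathbb{X}_a^\circ,\kappa)\stackrel[p]{i}{\longleftrightarrows}(\mathbb{L}_a^\circ,0)$ with homotopy $h=-\sigma\vert_{\mathbb{X}_a^\circ}$, and the perturbation $\delta=d$. With these inputs the proposition constructs the maps $i_\infty,p_\infty$ and outputs exactly the product $\alpha\beta=p_\infty(i_\infty(\alpha)\,i_\infty(\beta))$ on the perturbed retract $(\mathbb{L}_a^\circ,\partial^Y_\infty)$, together with the assertion that $i_\infty$ is a dg algebra homomorphism. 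Since (\ref{chunk-PL}) already verifies that the perturbed differential $\partial^Y_\infty$ equals the original differential $d$ of $\mathbb{L}_a$, the output is the desired dg algebra structure on $\mathbb{L}_a$ itself, and the proof reduces to checking the hypotheses of Proposition~\ref{descent-dga-perturbation}.

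The routine hypotheses I would dispatch as follows. First, $\mathbb{X}_a^\circ$ with differential $\kappa$ is a dg algebra, inherited from the dg algebra $\mathbb{X}_a=\tr_{\leq a-1}(\mathbb{S})$ of (\ref{chunk-truncation}): the product multiplies the $\Lambda$- and $S$-factors independently and $\kappa$ alone satisfies the Leibniz rule. Second, the retract is special: applying the construction of (\ref{truncated-homotopy}) to each exact row (and handling the bottom row via $\varepsilon$), the identity $\sigma^2=0$ from Lemma~\ref{sigmasquared} upgrades it so that the equations (\ref{specialDR}), namely $h^2=0$, $hi=0$, and $ph=0$, all hold. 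Third, $\delta=d$ is a small perturbation because the double complex is bounded, so $\delta h$ is elementwise nilpotent. Fourth, the perturbed complex $(\mathbb{X}_a^\circ,\kappa+d)$ is simply $\mathbb{X}_a$, which is a dg algebra; equivalently $\delta=d$ satisfies the Leibniz rule, which is precisely the extra hypothesis the proposition requires.

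The one hypothesis needing genuine care is that $h=-\sigma$ satisfies the generalized Leibniz rule on the truncated algebra $\mathbb{X}_a^\circ$, and this is where the bound $p\geq a+n$ enters. By Lemma~\ref{scaledLeibniz} the map $\sigma$ satisfies the stronger scaled Leibniz rule, which immediately yields the generalized Leibniz rule for $\sigma$, hence for $-\sigma$. I would treat the truncation directly: for a product $\alpha\beta$ landing in some $\Lambda^i\otimes S_m$ with $m\geq a$, the product is zero in $\mathbb{X}_a$ and the containment is trivial; for $m<a$, Lemma~\ref{scaledLeibniz} applies verbatim, its sole requirement being that the scalar $\tfrac{1}{i+a'+j+b'}$ lie in $k$, where $a',b'$ denote the $S$-degrees of $\alpha,\beta$ (renamed to avoid clash with the truncation parameter $a$). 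Since a nonzero product forces $i+j\leq n$ and $a'+b'\leq a-1$, one has $i+a'+j+b'\leq n+a-1<a+n$, so $p\geq a+n$ guarantees invertibility, exactly as flagged in the paragraph preceding the statement.

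With every hypothesis in place, Proposition~\ref{descent-dga-perturbation} delivers both conclusions at once. I expect the main obstacle to lie not in the theorem itself, whose proof is essentially assembly and bookkeeping, but in the content already isolated beforehand: the properties $\sigma^2=0$ and the scaled Leibniz rule established in Lemmas~\ref{sigmasquared} and \ref{scaledLeibniz}, together with the Perturbation Lemma computation carried out in (\ref{chunk-PL}). At the level of the theorem, the subtlest remaining point is confirming that the generalized Leibniz rule survives the passage to the truncation $\mathbb{X}_a^\circ$ and that every denominator $i+m$ appearing in $\sigma$ (equivalently $i+a'+j+b'$ in the scaled rule) is invertible under the stated characteristic assumption.
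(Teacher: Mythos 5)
Your proposal is correct and follows essentially the same route as the paper: both apply Proposition~\ref{descent-dga-perturbation} to the special deformation retract from (\ref{chunk-PL}) with perturbation $\delta=d$, invoke Lemmas~\ref{sigmasquared} and \ref{scaledLeibniz} for the properties of $\sigma$, and dispose of the truncation and characteristic issues exactly as in the paper's note that the scaled Leibniz rule need only be checked for products landing in $\Lambda^i\otimes S_m$ with $i\leq n$ and $m<a$, whence $p\geq a+n$ suffices. Your verification of the routine hypotheses (smallness of $\delta$, specialness via $\sigma^2=0$, and the identification of the perturbed differential with $d$ on $\mathbb L_a$) is the same bookkeeping the paper delegates to (\ref{chunk-PL}) and (\ref{truncated-homotopy}).
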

 
\begin{rmk}
The product given in the theorem above can be described explicitly, using the definitions of $i_\infty$ and $p_\infty$,  as follows.  

Consider elements $\alpha,\beta\in \mathbb L_a$. 
If   one of them is in $ (\mathbb L_a)_0 = R$ then their product is the one coming from the $R$-module structure of each $(\mathbb L_a)_i$. If both have positive degree, 
then 
\[
\alpha\beta=\kappa(\widetilde{\alpha}\widetilde{\beta})
\]
where $\kappa$ is defined in (\ref{kappadef}) and 
 \begin{align*}
 \widetilde{\alpha} &= 
 (1+(-\sigma d)+(-\sigma d)^2+\cdots) \sigma  (\alpha) 
 \\   
 \widetilde{\beta} &= 
(1+(-\sigma d)+(-\sigma d)^2+\cdots) \sigma  (\beta)
 \end{align*}
 where the scaled de Rham differential $\sigma$ is defined in \ref{sigmadef}. 
\end{rmk}

\begin{rmk}
Because of the symmetric way in which the maps $\kappa$, $d$ and  $h$ are defined, the dg algebra structure defined on $\mathbb L_{a}$ in Theorem~\ref{thm-L} is invariant under the action of the symmetric group on the polynomial ring.
\end{rmk}

\begin{rmk}
\label{basisfree}
One may note that our algebra structure is, in fact, basis free, although we do not describe it in a basis free way. It is well known that the differentials in the complex from which we descend our structure are so, and one can see that the homotopy is as well, as it is just a scaled version of the de Rham map.
\end{rmk}
 \section{Comparison maps}
 \label{sec-homomorphism}

In this section we use the results from the previous sections to obtain comparison maps lifting the natural surjections 
$R/\fm^b \lra R/\fm^a$ for any $b\geq a$ to their respective minimal free resolutions $\mathbb L_b$ and $\mathbb L_a$, and these maps turn out to be dg algebra morphisms (for the dg algebra structures placed on them in the previous section). 
Since $\mathbb L_1$ is simply the Koszul complex $K$ on the variables, this yields that $K$ is a dg algebra over $\mathbb L_b$ for each $b\geq 1$.  

To set up the statement, recall from (\ref{xlhe}) that for any $c$ there is a homotopy equivalence 
\[
\mathbb X_c 
\stackrel[p_\infty]{i_\infty}{\longleftrightarrows}
\mathbb L_c
\]
pictured in (\ref{diagramXL}) that is used in Theorem~\ref{thm-L} to place a dg algebra structure on $\mathbb L_c$ for which $i_\infty$ is a dg algebra homomorphism. 
Although the value of $c$ varies below, it should be clear from the context which $i_\infty$ and $p_\infty$ maps are being applied.
Recall also from (\ref{chunk-truncation})
that we can view $\mathbb X_c$ as the quotient 
$\mathbb S / \tr_{\geq c}(\mathbb S)$
of the dg algebra $\mathbb S$ by the dg ideal 
\begin{equation*}
     \tr_{\geq c}(\mathbb S)=
    \{ \Lambda^i \otimes S_j \mid j \geq c \}
\end{equation*}
In this way, $\mathbb X_c$ inherits the dg algebra structure from $\mathbb S$. Therefore, if $b\geq a$, the inclusion of dg ideals $\tr_{\geq b}(\mathbb S) \hookrightarrow \tr_{\geq a}(\mathbb S)$ gives a natural quotient map 
\[
\pi_{b,a}
\colon
\mathbb X_b
=\mathbb S / \tr_{\geq b}(\mathbb S)
\onto 
\mathbb S / \tr_{\geq a}(\mathbb S)
=\mathbb X_a
\]
which has the effect of sending the columns $\Lambda^i\otimes S_j$ to zero for $a\leq j\leq b-1$. This is clearly a homomorphism of dg algebras. 

\begin{thm}
\label{dghomomorphism}
 Let $a$ and $b$ be positive integers with $b \geq a$. The chain map  
 \[
 f_{b,a}
 =p_{\infty}\pi_{b,a}i_{\infty}
 \colon \mathbb L_b \to \mathbb L_a
 \] 
 is a homomorphism of dg algebras that gives a lifting of the natural surjection $R/\fm^b \lra R/\fm^a$. In particular, the Koszul complex on the variables, which is $\mathbb L_1$,  is a dg algebra over $\mathbb L_b$ for every positive integer $b$. 
 
 Moreover, if $c\geq b \geq a$ then $f_{c,a}=f_{b,a} f_{c,b}$.
\end{thm}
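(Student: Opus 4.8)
The map $f_{b,a}=p_\infty\pi_{b,a}i_\infty$ is a composite of chain maps, hence a chain map, and it is unital: $i_\infty(1)=1$ and $\pi_{b,a}(1)=1$ because these are dg homomorphisms, while $p_\infty(1)$ is the unit of $\mathbb L_a$ as in the proof of Proposition~\ref{descent-dga-perturbation}. The plan is to reduce \emph{both} the multiplicativity of $f_{b,a}$ and the cocycle identity to a single statement, which I will call the key lemma: for $d\geq c$ the composite $\pi_{d,c}i_\infty\colon\mathbb L_d\to\mathbb X_c$ takes values in the image of $i_\infty\colon\mathbb L_c\to\mathbb X_c$. Writing $R_c=i_\infty p_\infty-1=\partial h_\infty+h_\infty\partial$ on $\mathbb X_c$, this is equivalent to $i_\infty p_\infty\,\pi_{d,c}i_\infty=\pi_{d,c}i_\infty$, i.e. $R_c\,\pi_{d,c}i_\infty=0$.

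To establish the key lemma it is enough to check $h_\infty\,\pi_{d,c}i_\infty=0$; indeed, since $\pi_{d,c}i_\infty$ is a chain map, $R_c\,\pi_{d,c}i_\infty=\partial(h_\infty\pi_{d,c}i_\infty)+(h_\infty\pi_{d,c}i_\infty)\partial$ then vanishes. For this I would read off the shapes of the maps from (\ref{i-infty}) and (\ref{h-infty}): on each summand $L_{i,d}$ one has $i_\infty=\sum_{k\geq0}(-\sigma d)^k\sigma$, so every term is of the form $\sigma(-)$ and hence lies in $\im\sigma$, and on the bottom summand $i_\infty=\varepsilon^{-1}$ lands in $\Lambda^0\otimes S_0$, where $\sigma$ vanishes because its target is zero. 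The projection $\pi_{d,c}$ merely discards the homogeneous components sitting in the columns $S_j$ with $c\leq j\leq d-1$, leaving a sum of terms each still of the form $\sigma(-)$, so $\pi_{d,c}i_\infty$ again takes values in $\im\sigma$. On the other hand every term of $h_\infty=-\sigma\sum_{m\geq0}(-d\sigma)^m$ has $\sigma$ as its rightmost operator, so $h_\infty$ applies $\sigma$ first; since $\sigma^2=0$ by Lemma~\ref{sigmasquared}, $h_\infty$ annihilates $\im\sigma$, and $h_\infty\,\pi_{d,c}i_\infty=0$ follows.

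Granting the key lemma, multiplicativity drops out. For $\alpha,\beta\in\mathbb L_b$, using that $i_\infty$ (Theorem~\ref{thm-L}) and $\pi_{b,a}$ are dg homomorphisms gives $f_{b,a}(\alpha\beta)=p_\infty(\pi_{b,a}i_\infty\alpha\cdot\pi_{b,a}i_\infty\beta)$; on the other side, by the definition of the product on $\mathbb L_a$ together with $i_\infty f_{b,a}=i_\infty p_\infty\pi_{b,a}i_\infty=\pi_{b,a}i_\infty$ (the key lemma), one gets $f_{b,a}(\alpha)f_{b,a}(\beta)=p_\infty(i_\infty f_{b,a}\alpha\cdot i_\infty f_{b,a}\beta)=p_\infty(\pi_{b,a}i_\infty\alpha\cdot\pi_{b,a}i_\infty\beta)$, and the two agree. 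That $f_{b,a}$ lifts the surjection $R/\fm^b\to R/\fm^a$ I would verify in homological degree $0$: there $i_\infty=\varepsilon^{-1}$ into $\Lambda^0\otimes S_0$, $\pi_{b,a}$ is the identity on that column, and $p_\infty=\varepsilon$, so $f_{b,a}$ is the identity of $R$ and hence descends to the natural surjection since $\fm^b\subseteq\fm^a$. Taking $a=1$ makes $\mathbb L_1=K$ a dg algebra over $\mathbb L_b$.

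The cocycle identity reduces to the key lemma as well. Because the quotient maps compose, $\pi_{c,a}=\pi_{b,a}\pi_{c,b}$, one has $f_{c,a}=p_\infty\pi_{b,a}\pi_{c,b}i_\infty$, whereas $f_{b,a}f_{c,b}=p_\infty\pi_{b,a}(i_\infty p_\infty)\pi_{c,b}i_\infty$; the key lemma for the pair $(c,b)$ gives $i_\infty p_\infty\pi_{c,b}i_\infty=\pi_{c,b}i_\infty$, so the two sides coincide. The hard part is the key lemma itself. The natural temptation is to transport the special-deformation-retract data across $\pi_{d,c}$, but the Perturbation-Lemma homotopies do \emph{not} commute with $\pi_{d,c}$ (for instance $\sigma$ carries a truncated column into a surviving one, so $\sigma\pi_{d,c}\neq\pi_{d,c}\sigma$); the point of the argument above is to sidestep this entirely by tracking images under $\sigma$ and invoking $\sigma^2=0$.
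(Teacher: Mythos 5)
Your proof is correct, and its skeleton coincides with the paper's: both arguments make everything turn on the single identity $i_\infty p_\infty\,\pi_{b,a}i_\infty=\pi_{b,a}i_\infty$ (equivalently $i_\infty f_{b,a}=\pi_{b,a}i_\infty$), your ``key lemma,'' from which multiplicativity, the degree-zero lifting claim, and $f_{c,a}=f_{b,a}f_{c,b}$ follow exactly as you derive them. The genuine difference lies in how that identity is established. The paper computes: it records $\pi_{b,a}i_\infty=((h\delta)^{b-a}+\cdots+(h\delta)^{b-1})i$ and the closed formula $f_{b,a}=p_\infty(h\delta)^{b-a}i$, then expands $i_\infty p_\infty (h\delta)^{b-a}i$ and collapses the resulting series using $ip=\sigma\kappa$ on the relevant columns together with $\sigma\kappa\sigma=\sigma$ (a consequence of $\kappa\sigma+\sigma\kappa=1$ and $\sigma^2=0$). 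You argue homotopically: $i_\infty p_\infty-1=\partial h_\infty+h_\infty\partial$ by the Perturbation Lemma, so the identity follows once $h_\infty\pi_{b,a}i_\infty=0$, which you extract from three structural facts --- every term of $i_\infty$ in positive degrees begins with $\sigma$, the projection $\pi_{b,a}$ only deletes graded components and hence preserves $\im\sigma$, and every term of $h_\infty$ applies $\sigma$ first --- plus $\sigma^2=0$ from Lemma~\ref{sigmasquared}. Your route needs only $\sigma^2=0$ and the formal output of the Perturbation Lemma, not the contracting-homotopy identity $\sigma\kappa\sigma=\sigma$, and it isolates cleanly why the statement survives the failure of $\pi_{b,a}$ to commute with the perturbed homotopies, a pitfall you rightly flag. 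What the paper's computation buys in exchange is the explicit formula $f_{b,a}=p(h\delta)^{b-a}i$, reused in Remark~\ref{formula} to conclude $\im (f_{b,a})_j\subseteq\fm^{b-a}(\mathbb L_a)_j$ for $j>0$; your argument does not produce that formula as stated, though it follows in one line from your own observation, since $p_\infty$ kills every term of $\pi_{b,a}i_\infty=\sum_{k\geq b-a}(h\delta)^k i$ except the one with $k=b-a$.
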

 
\begin{proof}
First note that $f_{b,a}$ is a chain map since it is a composition of chain maps. Also, $(f_{b,a})_0$ is the identity map on $R$; thus $f_{b,a}$ gives a lifting of  the natural surjection 
$R/\fm^b \lra R/\fm^a$.

Next we show that $f_{b,a}$ is a homomorphism of dg algebras. 
Clearly, if $b=a$ then $f_{b,a}$ is the identity map. So we may assume that $b>a$. 
Let $\alpha$ and $\beta$ be homogeneous elements of $\mathbb L_b$. If either sits in degree 0, then $f_{b,a}(\alpha\beta)=f_{b,a}(\alpha)f_{b,a}(\beta)$ as $f_{b,a}$ is a homomorphism of $R$-modules. So we may assume that $\alpha\in L_{i,b}$ and $\beta\in L_{j,b}$ for some $0\leq i,j \leq n$. 
Since $\pi_{b,a}$ and $i_{\infty}$ are homomorphisms of dg algebras, one has 
\begin{align*}
    f_{b,a}(\alpha\beta)
    &=p_\infty \pi_{b,a} i_\infty (\alpha \beta) 
    \\
    &=p_\infty(\pi_{b,a} i_\infty (\alpha)  \tr_{b,a} i_\infty (\beta))
\end{align*}
We pause to compute the composition 
\begin{align}\label{tri}
\begin{aligned}
\pi_{b,a} i_\infty 
&=\pi_{b,a}(1+(h\delta) +\cdots +(h\delta)^{b-1})i
\\
&=((h\delta)^{b-a} +\cdots + (h\delta)^{b-1})i
\end{aligned}
\end{align}
and so we have 
\begin{align*}
    f_{b,a}(\alpha\beta)
    &=p_\infty\!\! 
    \left(
    [(h\delta)^{b-a} +\cdots + (h\delta)^{b-1}] i(\alpha)
    \, [(h\delta)^{b-a} +\cdots + (h\delta)^{b-1}]i(\beta)
    \right)
\end{align*}

From (\ref{tri}) we also get an alternate formula for $f_{b,a}$ as follows which we use in the next part 
\begin{align}\label{altf}
\begin{aligned}
   f_{b,a} 
   &=p_\infty \pi_{b,a} i_\infty
   \\
   &=p_\infty ((h\delta)^{b-a} +\cdots + (h\delta)^{b-1})i
   \\
   &=p_\infty (h\delta)^{b-a} i
   \end{aligned}
\end{align}
where the other terms disappear as they are in the portion of the domain where $p_\infty$ equals 0. 
Not that in the last line $p_\infty$ can be replaced by $p$.

Next we compute 
\begin{align*}
   f_{b,a}(\alpha)f_{b,a}(\beta) &=   p_{\infty}(i_{\infty}(f_{b,a}(\alpha)) \, i_{\infty}(f_{b,a}(\beta)))
\end{align*}
by the definition of the product in $\mathbb L_a$. 
This is equal to $f_{b,a}(\alpha\beta)$ because
\begin{align}\label{if}
\begin{aligned}
  i_{\infty} f_{b,a}
  &= i_{\infty} p_{\infty}(h \delta)^{b-a} i \\
  &= (1+(h\delta) +\cdots +(h\delta)^{a-1})ip[h\delta(h\delta)^{b-a-1}]i\\
  &=(1+(h\delta) +\cdots +(h\delta)^{a-1})\sigma \kappa(-\sigma) \delta(h\delta)^{b-a-1}i\\
  &=(1+(h\delta) +\cdots +(h\delta)^{a-1})(-\sigma) \delta(h\delta)^{b-a-1}i\\
   &=(1+(h\delta) +\cdots +(h\delta)^{a-1})(h\delta)^{b-a}i\\
   &=((h\delta)^{b-a} +\cdots +(h\delta)^{b-1})i
   \end{aligned}
\end{align}
where the first equality is by (\ref{altf}), the second one is by the definitions of $i_\infty$ and $p_\infty$, the third one is by the definitions of $i$, $p$, and $h$, the fourth one is because $\sigma\kappa\sigma=\sigma$ since the homotopy $\sigma$ satisfies $\sigma\kappa=1-\kappa\sigma$ and $\sigma^2=0$, and the fifth is because $h=-\sigma$. Note that when we apply the definitions of $i$, $p$, and $h$ we are using that the terms are in $\Lambda^j\otimes S$ for $j>0$. 

Last we compute the  composition 
\begin{align*}
   f_{b,a} f_{c,b}&=\left ( p_{\infty}\pi_{b,a} i_{\infty}\right ) f_{c,b} \\
   &=p_{\infty}\pi_{b,a}  ( (h\delta)^{c-b} +\cdots +(h\delta)^{c-1} )i\\
   &=p_{\infty}(h\delta)^{c-a} i = f_{c,a}
\end{align*}
where the second equality is by (\ref{if}), the third is from the definitions of the maps, and the last is by (\ref{altf}). 
as desired. 
\end{proof}
 
Note that for $a=1$, of course, the map $p_{\infty} \colon \mathbb X_1 \to \mathbb L_1$ is an isomorphism of complexes, hence it is trivial that $f_{b,a}$ is a homomorphism of dg algebras since $\pi_{b,a}$ and $i_{\infty}$ always are. 

\begin{rmk}
\label{formula}
In the proof above, the following more explicit formula for the map 
$f_{b,a} \colon \mathbb L_b \to \mathbb L_a$ 
was derived; see~\ref{altf}. 
\begin{equation}
   f_{b,a}=p (h\delta)^{b-a} i
\end{equation}
As a consequence we see that for $j>0$ 
\begin{equation}
   \im (f_{b,a})_j \subseteq \fm^{b-a} \mathbb (L_a)_j
\end{equation}
since the map $\delta=d=\kos\otimes 1$ has image in $\fm$ times the next free module as $\kos$ is the differential in the Koszul complex on the variables. 
This may also be seen in an elementary way using long exact sequences of Tor modules.  
\end{rmk}

\section*{Acknowledgements}
We thank Srikanth Iyengar for suggesting the Perturbation Lemma to us, Ben Briggs for discussions about $A_\infty$-algebras, and Luchezar Avramov for some helpful comments. 
We also Daniel Murfet for asking us questions that encouraged us to explore the connections to the Homotopy Transfer Theorem.

\bibliographystyle{amsalpha}
\bibliography{citations}

\end{document}